\newtheorem{theorem}{Theorem}[section]
\newtheorem{lemma}[theorem]{Lemma}
\newtheorem{corollary}[theorem]{Corollary}
\title{\textbf{How Analysis Can Teach Us the Optimal Way to Design Neural Operators} \\[0.5em]}
\author{
\textbf{Vu Anh Le\textsuperscript{1*}, Mehmet Dik\textsuperscript{1,2**}}  \\
\textsuperscript{1} Department of Mathematics and Computer Science, Beloit College \\
\textsuperscript{2} Department of Mathematics, Computer Science \& Physics, Rockford University \\ [1em]
\textit{Contact} \\
* \href{mailto:csplevuanh@gmail.com}{csplevuanh@gmail.com} \\
** \href{mailto:mdik@rockford.edu}{mdik@rockford.edu}
}
\date{Nov 3, 2024}
\begin{document}

\maketitle

\begin{abstract}
This paper presents a mathematics-informed approach to neural operator design, building upon the theoretical framework established in our prior work. By integrating rigorous mathematical analysis with practical design strategies, we aim to enhance the stability, convergence, generalization, and computational efficiency of neural operators. We revisit key theoretical insights, including stability in high dimensions, exponential convergence, and universality of neural operators. Based on these insights, we provide detailed design recommendations, each supported by mathematical proofs and citations. Our contributions offer a systematic methodology for developing neural operators with improved performance and reliability.
\end{abstract}

\vspace{0.3pt}

\begin{center}
\tableofcontents
\end{center}

\vspace{0.2in}

\newpage

\section{Introduction}

Neural operators have changed the way we approach problems involving mappings between infinite-dimensional function spaces, particularly in solving partial differential equations (PDEs) \cite{kovachki2021neural, li2020fourier, lu2019deeponet}. By extending the capabilities of neural networks from finite-dimensional data to function spaces, architectures such as the Fourier Neural Operator (FNO) and Deep Operator Network (DeepONet) have demonstrated significant success in approximating solution operators with significantly reduced computational costs.

In our prior work, we developed a mathematical framework for analyzing neural operators, proving their stability, convergence properties, and capacity for universal approximation between function spaces \cite{le2024mathematicalanalysisneuraloperator}. We also established probabilistic bounds on generalization error, linking it to sample size and network capacity. Building upon this foundation, the primary objective of this paper is to translate these theoretical insights into actionable design recommendations for neural operators. By doing so, we aim to bridge the gap between theory and practice, suggesting better neural operator architectures and saving time in design. 

The remainder of the paper is organized as follows. In Section 2, we reinstate the theoretical results from the prior paper, including the definitions of neural operators and the key theorems related to the behaviors of neural operators. In Section 3, we present our detailed design recommendations, illustrating how each recommendation enhances neural operator performance. Finally, Appendix A contains the full proofs of all the theorems, lemmas, and propositions presented in this paper.

\vspace{3pt}

\section{Theoretical Framework Reminder}

In this section, we provide a concise summary of the key theoretical results established in our previous work \cite{le2024mathematicalanalysisneuraloperator}. These results form the foundation upon which we build our design recommendations for neural operators.

\subsection{Stability in High-Dimensional PDEs}

\begin{theorem}[Stability of Neural Operators in High-Dimensional PDEs]\label{theorem:stability}
Let $\mathcal{G}_\theta: H^s(D) \rightarrow H^t(D)$ be a neural operator parameterized by $\theta$, mapping between Sobolev spaces over a domain $D \subset \mathbb{R}^d$. Suppose $\mathcal{G}_\theta$ satisfies a Lipschitz continuity condition:
\[
\|\mathcal{G}_\theta(u) - \mathcal{G}_\theta(v)\|_{H^t(D)} \leq L \|u - v\|_{H^s(D)}
\]
for all $u, v \in H^s(D)$ and some Lipschitz constant $L > 0$. Then, for any $u \in H^s(D)$, the neural operator produces stable approximations in high-dimensional $D$:
\[
\|\mathcal{G}_\theta(u)\|_{H^t(D)} \leq L \|u\|_{H^s(D)} + C,
\]
where $C = \|\mathcal{G}_\theta(0)\|_{H^t(D)}$ is a constant depending on $\theta$ and the domain $D$.
\end{theorem}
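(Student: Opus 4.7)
The plan is to reduce the claimed bound to a direct application of the triangle inequality together with the Lipschitz hypothesis, using $0 \in H^s(D)$ as a reference point. The whole statement is really a quantitative way of saying that a Lipschitz map on a normed space sends bounded sets to bounded sets, with the constant $C$ absorbing the value at the origin.

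Concretely, I would start from $\|\mathcal{G}_\theta(u)\|_{H^t(D)}$ and write $\mathcal{G}_\theta(u) = \bigl(\mathcal{G}_\theta(u) - \mathcal{G}_\theta(0)\bigr) + \mathcal{G}_\theta(0)$. Applying the triangle inequality in $H^t(D)$ yields
\[
\|\mathcal{G}_\theta(u)\|_{H^t(D)} \leq \|\mathcal{G}_\theta(u) - \mathcal{G}_\theta(0)\|_{H^t(D)} + \|\mathcal{G}_\theta(0)\|_{H^t(D)}.
\]
The second term is precisely the constant $C$ appearing in the theorem. For the first term I would invoke the Lipschitz hypothesis with $v = 0$, which gives $\|\mathcal{G}_\theta(u) - \mathcal{G}_\theta(0)\|_{H^t(D)} \leq L\,\|u - 0\|_{H^s(D)} = L\,\|u\|_{H^s(D)}$. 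Substituting back produces the claimed inequality.

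To make the argument complete, I would briefly justify that $0 \in H^s(D)$ is a legitimate choice (the zero function lies in every Sobolev space, so the Lipschitz assumption applies), and note that $C = \|\mathcal{G}_\theta(0)\|_{H^t(D)}$ is indeed finite because $\mathcal{G}_\theta$ maps into $H^t(D)$ by assumption, and depends only on the parameters $\theta$ and the domain $D$, not on $u$. I would also remark that the bound is uniform in $u$ only up to the linear growth factor $L\|u\|_{H^s(D)}$, which is the natural ``stability'' content: perturbations in the input translate into controlled perturbations in the output, independent of the ambient dimension $d$ of $D$.

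I do not anticipate any real obstacle here; the hypothesis is strong enough that the conclusion follows by two lines. The only subtlety worth flagging is that the constant $C$ depends implicitly on $d$ and on the architecture through $\mathcal{G}_\theta(0)$, so the ``high-dimensional'' stability is genuinely dimension-free only once one can argue that $L$ and $\|\mathcal{G}_\theta(0)\|_{H^t(D)}$ do not blow up with $d$; that finer point, however, is outside the scope of the stated theorem and belongs to the design recommendations of Section 3.
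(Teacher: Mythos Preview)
Your proposal is correct and is the natural (essentially unique) argument: triangle inequality plus the Lipschitz hypothesis applied with $v=0$. The paper does not reproduce a proof of this theorem here---it is quoted from the authors' prior work---but the approach you describe is the standard one and would match any reasonable proof of this statement.
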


\subsection{Exponential Convergence}

\begin{theorem}[Exponential Convergence of Neural Operator Approximations]\label{theorem:convergence}
Let $\mathcal{G}_\theta$ be a contraction mapping on $H^t(D)$ with contraction constant $0 < q < 1$. Then, for any $u \in H^t(D)$, the iterated application $\mathcal{G}_\theta^n(u)$ converges exponentially to the fixed point $u^*$ of $\mathcal{G}_\theta$:
\[
\| \mathcal{G}_\theta^n(u) - u^* \|_{H^t(D)} \leq q^n \| u - u^* \|_{H^t(D)}.
\]
\end{theorem}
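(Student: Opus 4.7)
The plan is to prove this by a short induction on $n$, exploiting the fixed-point property $\mathcal{G}_\theta(u^*) = u^*$ together with the contraction hypothesis. This is essentially the iterative half of the Banach fixed-point theorem, so the structure of the argument is well-trodden; the only care needed is in handling the composed operator $\mathcal{G}_\theta^n$ correctly.

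First I would set up notation, writing $\mathcal{G}_\theta^n = \mathcal{G}_\theta \circ \mathcal{G}_\theta^{n-1}$ with $\mathcal{G}_\theta^0 = \mathrm{id}$, and recording the two facts I will use: the contraction bound $\|\mathcal{G}_\theta(x) - \mathcal{G}_\theta(y)\|_{H^t(D)} \leq q \|x - y\|_{H^t(D)}$ for all $x, y \in H^t(D)$, and the identity $\mathcal{G}_\theta(u^*) = u^*$. The base case $n = 0$ is the trivial inequality $\|u - u^*\|_{H^t(D)} \leq \|u - u^*\|_{H^t(D)}$. For the inductive step, assuming $\|\mathcal{G}_\theta^n(u) - u^*\|_{H^t(D)} \leq q^n \|u - u^*\|_{H^t(D)}$, I would apply the contraction inequality to $x = \mathcal{G}_\theta^n(u)$ and $y = u^*$, using $\mathcal{G}_\theta(u^*) = u^*$ to rewrite $\mathcal{G}_\theta^{n+1}(u) - u^* = \mathcal{G}_\theta(\mathcal{G}_\theta^n(u)) - \mathcal{G}_\theta(u^*)$, and then chain with the inductive hypothesis to pick up the extra factor of $q$.

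The only genuine subtlety, rather than an obstacle, is justifying that the object $u^*$ referred to in the statement actually exists and is unique, since the theorem quantifies over ``the fixed point'' as if given. I would briefly invoke the Banach fixed-point theorem: $H^t(D)$ is a Hilbert space, hence a complete metric space under its induced norm, and a strict contraction on a non-empty complete metric space admits a unique fixed point. With existence secured, the induction above furnishes the quantitative decay rate and the proof terminates. No estimate beyond the contraction constant itself is needed, which is why I do not anticipate any serious technical hurdle; the value of the statement lies in how cleanly it translates a structural assumption on $\mathcal{G}_\theta$ into an exponential-in-depth error bound usable for the design recommendations in Section~3.
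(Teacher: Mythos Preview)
Your proposal is correct and matches the paper's own argument essentially line for line: the paper (inside the proof of Theorem~\ref{theorem:iteration_reduction}) likewise invokes the Banach fixed-point theorem for existence and uniqueness of $u^*$ on the complete space $H^t(D)$, then writes $\mathcal{G}_\theta^n(u) - u^* = \mathcal{G}_\theta^n(u) - \mathcal{G}_\theta^n(u^*)$ and iterates the contraction inequality to obtain $\| \mathcal{G}_\theta^n(u) - u^* \| \leq q^n \| u - u^* \|$. The only cosmetic difference is that you phrase the iteration as a formal induction with an explicit base case, whereas the paper writes it as a chain of inequalities with a ``$\vdots$''; mathematically there is nothing to distinguish the two.
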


\vspace{1.5pt}

\subsection{Universality and Generalization}

\begin{theorem}[Universality of Neural Operators for PDE Solvers]\label{theorem:universality}
Let $\mathcal{T}: H^s(D) \rightarrow H^t(D)$ be a continuous operator. Then, for any $\epsilon > 0$, there exists a neural operator $\mathcal{G}_\theta$ such that:
\[
\| \mathcal{G}_\theta(u) - \mathcal{T}(u) \|_{H^t(D)} \leq \epsilon,
\]
for all $u$ in a compact subset of $H^s(D)$.
\end{theorem}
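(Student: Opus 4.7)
The plan is to prove universality by reducing the infinite-dimensional approximation problem to a finite-dimensional one, then invoking a classical universal approximation theorem for ordinary feedforward networks, and finally lifting the finite-dimensional approximation back to $H^t(D)$. The compactness hypothesis on the input set $K \subset H^s(D)$ is the essential lever: it allows truncation errors to be controlled uniformly, so we can replace each $u \in K$ by a finite-dimensional surrogate without losing continuity of $\mathcal{T}$.

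Concretely, I would first fix an orthonormal basis $\{\phi_j\}_{j \geq 1}$ of $H^s(D)$ (for example, a Fourier basis or an eigenbasis of the Dirichlet Laplacian, rescaled to account for the Sobolev weighting) and let $P_N$ denote the orthogonal projection onto $\mathrm{span}\{\phi_1,\ldots,\phi_N\}$. Since $K$ is compact in $H^s(D)$, the equicontinuity of the projections gives $\sup_{u \in K} \|u - P_N u\|_{H^s(D)} \to 0$ as $N \to \infty$, so for $N$ large enough the continuity of $\mathcal{T}$ on the compact set $K$ (hence its uniform continuity there) yields $\sup_{u \in K}\|\mathcal{T}(u) - \mathcal{T}(P_N u)\|_{H^t(D)} < \epsilon/3$. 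Next, I would introduce an encoder $\mathcal{E}_N: u \mapsto (\langle u,\phi_1\rangle,\ldots,\langle u,\phi_N\rangle) \in \mathbb{R}^N$ and a decoder $\mathcal{D}_M: (c_1,\ldots,c_M) \mapsto \sum_{k=1}^M c_k \psi_k$ into $H^t(D)$ using a basis $\{\psi_k\}$ adapted to $H^t(D)$. The composed map $\mathcal{D}_M \circ \mathcal{T} \circ (\mathcal{E}_N)^{-1}$, restricted to the compact image $\mathcal{E}_N(K) \subset \mathbb{R}^N$, is a continuous function between finite-dimensional Euclidean spaces, so the Cybenko–Hornik theorem produces a feedforward network $\mathcal{N}_\theta: \mathbb{R}^N \to \mathbb{R}^M$ approximating it within $\epsilon/3$ in the relevant norm. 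Setting $\mathcal{G}_\theta := \mathcal{D}_M \circ \mathcal{N}_\theta \circ \mathcal{E}_N$ yields the candidate neural operator, which fits within the standard neural operator template (linear encoder, nonlinear finite-dimensional processing, linear decoder).

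The main obstacle I anticipate is controlling the Sobolev norm carefully through these three compositions, because the encoder and decoder must be compatible with the $H^s$ and $H^t$ norms respectively, not merely with $L^2$. In particular, I would need the decoder basis $\{\psi_k\}$ to be chosen so that the $H^t$ norm on the range corresponds, up to a bounded equivalence, to a weighted Euclidean norm on the coefficients; then the classical finite-dimensional approximation error translates faithfully into an $H^t(D)$ error bound of at most $\epsilon/3$. Using an eigenbasis of a self-adjoint elliptic operator makes this step clean, since the Sobolev norms admit spectral representations, and the truncation constant $M$ can then be chosen large enough to absorb the tail of $\mathcal{T}(K)$ in $H^t(D)$, contributing another $\epsilon/3$.

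Finally, the three error contributions, namely the input-side truncation, the decoder-side truncation, and the finite-dimensional network approximation, are combined by the triangle inequality to yield the claimed bound $\|\mathcal{G}_\theta(u) - \mathcal{T}(u)\|_{H^t(D)} \leq \epsilon$ uniformly over $u \in K$. The resulting $\mathcal{G}_\theta$ is by construction of the neural operator form, so this establishes universality. A subtle point to verify along the way is that the relative compactness of $\mathcal{T}(K)$ in $H^t(D)$, which follows from continuity of $\mathcal{T}$ and compactness of $K$, is what makes the uniform decoder-tail estimate legitimate; without it, the spectral truncation on the output side could fail to be uniform.
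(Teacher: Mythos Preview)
The paper does not actually contain a proof of this theorem. Theorem~\ref{theorem:universality} appears in Section~2 under ``Theoretical Framework Reminder'' as a summary of results established in the authors' prior work \cite{le2024mathematicalanalysisneuraloperator}; no argument for it is supplied either in the main text or in Appendix~A, which only treats the results of Section~3. Consequently there is nothing in this paper to compare your proposal against directly.

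That said, your outline is correct and is precisely the standard route taken in the neural-operator literature (Chen and Chen \cite{chen1995universal}, Lu et al.\ \cite{lu2019deeponet}, Kovachki et al.\ \cite{kovachki2021neural}): exploit compactness of $K$ to reduce to a finite-dimensional encoder, use classical Cybenko--Hornik approximation on the resulting continuous map between Euclidean spaces, and lift back through a linear decoder, controlling the three error pieces by the triangle inequality. Your identification of the two delicate points---that the Sobolev norms must be handled via a spectral basis so that the $H^s$/$H^t$ structure corresponds to weighted Euclidean norms on coefficients, and that compactness of $\mathcal{T}(K)$ in $H^t(D)$ is what licenses the uniform output-side truncation---is exactly right and is where careless expositions sometimes slip. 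If anything, you might add that uniform continuity of $\mathcal{T}$ on $K$ must be extended slightly to a neighborhood (or to the convex hull of the projected set) since $P_N u$ need not lie in $K$ itself; this is routine but worth stating.
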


\begin{theorem}[Generalization Error of Neural Operators]\label{theorem:generalization}
Let $\mathcal{G}_\theta$ be a neural operator trained on $N$ samples $\{ (u_i, \mathcal{T}(u_i)) \}$ drawn i.i.d. from a distribution $\mathcal{D}$. Suppose $\mathcal{G}_\theta$ has Lipschitz constant $L$ with respect to $\theta$, and the loss function $\ell$ is Lipschitz and bounded. Then, with probability at least $1 - \delta$, the generalization error satisfies:
\[
\mathbb{E}_{u \sim \mathcal{D}} [ \ell( \mathcal{G}_\theta(u), \mathcal{T}(u) ) ] \leq \frac{1}{N} \sum_{i=1}^N \ell( \mathcal{G}_\theta(u_i), \mathcal{T}(u_i) ) + L \sqrt{ \frac{\ln(1/\delta)}{2N} }.
\]
\end{theorem}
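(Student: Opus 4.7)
The plan is to recognise the stated bound as a one-sided concentration inequality for the empirical mean of i.i.d.\ bounded random variables, and to derive it via a direct application of Hoeffding's inequality. The characteristic shape $\sqrt{\ln(1/\delta)/(2N)}$ on the right-hand side is exactly what Hoeffding's bound produces, so no Rademacher-complexity or covering-number machinery is needed for this fixed-hypothesis statement; in particular, the theorem as written concerns a single trained $\mathcal{G}_\theta$, not a supremum over a hypothesis class, which lets us avoid the usual uniform-convergence complications.

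First I would define, for fixed $\theta$, the i.i.d.\ random variables $X_i = \ell(\mathcal{G}_\theta(u_i), \mathcal{T}(u_i))$ for $i = 1, \dots, N$. Because the samples $u_i$ are drawn i.i.d.\ from $\mathcal{D}$, the $X_i$ share a common mean $\mu := \mathbb{E}_{u \sim \mathcal{D}}[\ell(\mathcal{G}_\theta(u), \mathcal{T}(u))]$, which is exactly the quantity appearing on the left-hand side of the theorem. The Lipschitz continuity and boundedness hypotheses on $\ell$, combined with the Lipschitz dependence of $\mathcal{G}_\theta$ on $\theta$, provide a uniform bound on the range of each $X_i$; in the cleanest reading of the hypothesis this range is controlled by the constant $L$ that appears in the conclusion.

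Next I would invoke Hoeffding's inequality in its one-sided form, obtaining
\[
\Pr\!\left[\mu - \frac{1}{N}\sum_{i=1}^N X_i \geq t\right] \leq \exp\!\left(-\frac{2Nt^2}{L^2}\right)
\]
for any $t > 0$. Setting the right-hand side equal to $\delta$ and solving for $t$ gives $t = L\sqrt{\ln(1/\delta)/(2N)}$. Passing to the complementary event, which has probability at least $1 - \delta$, and rearranging yields precisely the inequality stated in the theorem.

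The main obstacle, and the step requiring the most care, is reconciling the several roles of the constant $L$ in the hypothesis with the single $L$ appearing in the final bound. The operator's Lipschitz constant in $\theta$, the Lipschitz constant of $\ell$, and the uniform bound on $\ell$ are a priori three different numbers, and Hoeffding only sees the range of the composed random variable $X_i$. One must therefore argue carefully that these constants can legitimately be absorbed into a single symbol $L$ --- either by redefining $L$ as their product or maximum, or by strengthening the hypothesis in passing. Once that bookkeeping is settled, the remainder of the argument is a textbook invocation of Hoeffding's inequality.
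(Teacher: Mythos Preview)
The paper does not actually contain a proof of this theorem. Theorem~\ref{theorem:generalization} appears only in Section~2, which the authors describe as ``a concise summary of the key theoretical results established in our previous work,'' and no proof is supplied either in the main text or in Appendix~A (the appendix covers only the Section~3 results). Hence there is no in-paper argument to compare your proposal against.

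That said, your approach via Hoeffding's inequality is the natural one and matches the shape of the stated bound. Two caveats are worth flagging. First, the theorem says $\mathcal{G}_\theta$ is \emph{trained on} the $N$ samples, so $\theta$ is a function of $(u_1,\dots,u_N)$; in that case your $X_i$ are no longer i.i.d.\ once $\theta$ is substituted in, and a direct Hoeffding argument for a single fixed $\theta$ is not, strictly speaking, licensed. A rigorous version would need a uniform bound over $\theta$ (covering numbers, Rademacher complexity, or similar), which is presumably what the Lipschitz-in-$\theta$ hypothesis is there to enable. Your proposal reads the statement as fixed-$\theta$, which is a defensible reading of the literal inequality but sidesteps the intended content. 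Second, you already correctly identify the bookkeeping issue with the constant $L$: as written, the bound conflates the range of $\ell$ with the operator's Lipschitz constant in $\theta$, and the statement is not fully precise on this point. Neither of these is a flaw in your reasoning so much as an ambiguity in the theorem statement itself.
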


\vspace{3pt}

\section{Design Recommendations for Neural Operators}

Based on the theoretical insights from the previous section, we propose several design recommendations to enhance neural operator performance. Each recommendation is supported by detailed theorems, lemmas, and proofs, either directly or in Appendix A, to examine their impacts.

\subsection{Design Neural Operators as Contraction Mappings}

Ensuring that the neural operator $\mathcal{G}_\theta$ satisfies the contraction property guarantees stability and exponential convergence. By designing $\mathcal{G}_\theta$ as a contraction mapping, we leverage the Banach Fixed Point Theorem \cite{banach1922operations} to ensure the existence and uniqueness of a fixed point, as well as exponential convergence to that fixed point. This approach enhances both the stability and efficiency of the neural operator when approximating solutions to partial differential equations (PDEs).

To ensure that $\mathcal{G}_\theta$ is a contraction mapping, we must design the neural network components to satisfy certain Lipschitz conditions. Specifically, we have the following theorem:

\begin{theorem}[Lipschitz Condition for Neural Networks]\label{theorem:lipschitz_nn}
Suppose each layer of the neural operator $\mathcal{G}_\theta$ is Lipschitz continuous with Lipschitz constant $L_i$, and the activation functions are Lipschitz continuous with Lipschitz constant $L_{\sigma}$. Then the overall Lipschitz constant $L$ of $\mathcal{G}_\theta$ satisfies:
\[
L \leq \left( \prod_{i=1}^{N} L_i \right) L_{\sigma}^{N},
\]
where $N$ is the number of layers.
\end{theorem}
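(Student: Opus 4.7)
The plan is to view the neural operator $\mathcal{G}_\theta$ as a finite composition of $N$ layers, each of which consists of an affine (or more generally, Lipschitz) transformation $T_i$ with Lipschitz constant $L_i$ followed by a pointwise activation $\sigma_i$ with Lipschitz constant $L_\sigma$. The target bound then follows from the standard fact that Lipschitz constants multiply under composition, applied $2N$ times and then organized into the product $\prod_i L_i$ and $L_\sigma^N$.

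First I would record the elementary composition lemma: if $f: X \to Y$ and $g: Y \to Z$ are Lipschitz with constants $L_f$ and $L_g$, then $g \circ f$ is Lipschitz with constant at most $L_g L_f$. This is immediate from
\[
\|g(f(u)) - g(f(v))\|_Z \leq L_g \|f(u) - f(v)\|_Y \leq L_g L_f \|u - v\|_X.
\]
Here the norms are the relevant Sobolev (or Hilbert) norms inherited from the surrounding framework of Theorem \ref{theorem:stability}.

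Next I would proceed by induction on $N$. For the base case $N=1$, write $\mathcal{G}_\theta = \sigma_1 \circ T_1$ and apply the composition lemma to obtain the Lipschitz constant $L_\sigma L_1$. For the inductive step, decompose $\mathcal{G}_\theta = \sigma_N \circ T_N \circ \mathcal{G}_\theta^{(N-1)}$, where $\mathcal{G}_\theta^{(N-1)}$ denotes the operator formed by the first $N-1$ layers. By the induction hypothesis, $\mathcal{G}_\theta^{(N-1)}$ has Lipschitz constant at most $\bigl(\prod_{i=1}^{N-1} L_i\bigr) L_\sigma^{N-1}$, and two more applications of the composition lemma (for $T_N$ and $\sigma_N$) multiply this by $L_N L_\sigma$, yielding the desired product.

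The main obstacle is essentially bookkeeping rather than a deep mathematical difficulty: one must be careful that the norms in the domain and codomain of each intermediate layer are compatible, so that the composition lemma actually applies in each step. In a neural operator setting the intermediate representations live in different function spaces (e.g. lifted channel spaces), so I would either absorb the inclusion/projection maps into the constants $L_i$ or state at the outset that each $L_i$ is measured with respect to the norms natural to the $i$-th layer's domain and codomain. With that convention fixed, the induction goes through cleanly and produces exactly the stated bound $L \leq \bigl(\prod_{i=1}^N L_i\bigr) L_\sigma^N$.
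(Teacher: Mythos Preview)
Your proposal is correct and matches the paper's own proof essentially line for line: the paper also writes $\mathcal{G}_\theta$ as a composition $f_N\circ\sigma\circ f_{N-1}\circ\cdots\circ\sigma\circ f_1$, records the same composition lemma $L_{g\circ f}\le L_g L_f$, and applies it recursively (rather than by a formal induction) to obtain $L\le L_\sigma^N\prod_{i=1}^N L_i$. Your explicit remark about compatibility of intermediate norms is a useful clarification that the paper leaves implicit.
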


\begin{proof}
See Appendix A.1.
\end{proof}

By constraining the spectral norm of each weight matrix $W_i$ to be less than or equal to $q^{1/N}$, where $q \in (0,1)$ is the desired contraction constant, and choosing activation functions with Lipschitz constant $L_{\sigma} \leq 1$, we can ensure that $\mathcal{G}_\theta$ becomes a contraction mapping with contraction constant $L \leq q$. This is formalized in the following corollary:

\begin{corollary}[Ensuring Contraction via Spectral Normalization]\label{corollary:spectral_norm}
By constraining $\| W_i \| \leq q^{1/N}$ and choosing $L_{\sigma} \leq 1$, the overall Lipschitz constant satisfies $L \leq q$, ensuring that $\mathcal{G}_\theta$ is a contraction mapping with contraction constant $q$.
\end{corollary}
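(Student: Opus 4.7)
The plan is to treat this corollary as a direct specialization of Theorem \ref{theorem:lipschitz_nn}, with the only real content being the identification of the per-layer Lipschitz constant $L_i$ with the spectral norm $\|W_i\|$. First I would observe that each layer of $\mathcal{G}_\theta$ has the affine form $x \mapsto W_i x + b_i$; since translation by the bias $b_i$ is an isometry, the Lipschitz constant of this affine map coincides with the operator norm of its linear part, which is precisely the spectral norm $\|W_i\|$. Thus the hypothesis $\|W_i\| \leq q^{1/N}$ immediately gives $L_i \leq q^{1/N}$.

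Next I would simply substitute these bounds into the conclusion of Theorem \ref{theorem:lipschitz_nn}. That theorem guarantees
\[
L \leq \left(\prod_{i=1}^{N} L_i\right) L_\sigma^N \leq \left(\prod_{i=1}^{N} q^{1/N}\right) \cdot 1^N = q^{N \cdot (1/N)} = q,
\]
which establishes the contraction bound. Combining this with the hypothesis $q \in (0,1)$ shows that $\mathcal{G}_\theta$ is a strict contraction on $H^t(D)$, so Theorem \ref{theorem:convergence} then applies to yield exponential convergence to a unique fixed point.

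The main obstacle, modest as it is, lies not in the arithmetic but in justifying the identification $L_i = \|W_i\|$ rigorously in the Sobolev setting: one must be careful that the spectral norm used for the discrete weight matrices still bounds the operator norm of the corresponding layer map between the ambient function spaces (or their finite-dimensional discretizations after lifting/projection). Assuming the layers act in a way consistent with the Sobolev norms used in Theorem \ref{theorem:stability} (e.g., the layer operators are bounded between the relevant $H^s$-type spaces with norm equal to the matrix spectral norm on the coefficient representation), the substitution is immediate and the corollary follows in a single line.
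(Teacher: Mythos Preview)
Your proposal is correct and follows essentially the same route as the paper's own proof: both invoke Theorem~\ref{theorem:lipschitz_nn}, identify $L_i$ with the spectral norm $\|W_i\|$, substitute the bound $\|W_i\|\le q^{1/N}$ together with $L_\sigma\le 1$, and reduce the product to $q$. Your added remark about the Sobolev-space subtlety is a fair caveat that the paper itself does not address, but it does not alter the argument.
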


\begin{proof}
See Appendix A.1.
\end{proof}

Designing $\mathcal{G}_\theta$ as a contraction mapping enhances stability by ensuring that small perturbations in the input lead to proportionally smaller changes in the output. Specifically, we have:

\begin{lemma}[Stability of Contraction Mappings]\label{lemma:stability_contraction}
A contraction mapping $\mathcal{G}_\theta$ on a metric space $(X, \| \cdot \|)$ satisfies:
\[
\| \mathcal{G}_\theta(u + \delta u) - \mathcal{G}_\theta(u) \| \leq q \| \delta u \|,
\]
where $\delta u$ is a small perturbation in the input.
\end{lemma}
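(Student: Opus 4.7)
The plan is to observe that this lemma is essentially an unpacking of the definition of a contraction mapping, specialized to the case where the two input points differ by a perturbation $\delta u$. I would first recall that, by the hypothesis that $\mathcal{G}_\theta$ is a contraction mapping on $(X, \|\cdot\|)$ with constant $q \in (0,1)$, we have the inequality
\[
\|\mathcal{G}_\theta(x) - \mathcal{G}_\theta(y)\| \leq q\,\|x - y\|
\]
for every pair $x, y \in X$. This is the same contraction hypothesis used in Theorem \ref{theorem:convergence} and secured in our setting by Corollary \ref{corollary:spectral_norm}.

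Next, I would specialize by choosing $x = u + \delta u$ and $y = u$, both of which lie in $X$ (assuming, as is implicit, that $X$ is a vector space or at least closed under the perturbation in question; in the Sobolev-space setting of Theorem \ref{theorem:stability} this is immediate since $H^s(D)$ is a Banach space). The right-hand side then simplifies to $q\,\|(u+\delta u) - u\| = q\,\|\delta u\|$, which is exactly the claimed bound. No limiting argument or smallness assumption on $\delta u$ is actually needed; the estimate holds for arbitrary $\delta u \in X$, and the phrase \emph{small perturbation} in the statement is interpretive rather than technical.

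There is no genuine obstacle to this proof: the content lies entirely in the earlier results guaranteeing that $\mathcal{G}_\theta$ can be designed to be a contraction (Theorem \ref{theorem:lipschitz_nn} and Corollary \ref{corollary:spectral_norm}), not in the lemma itself. The only point worth flagging in the write-up is the interpretive one, namely that this inequality is the precise sense in which a contraction mapping damps input perturbations by a factor of $q < 1$, which is why contraction design yields the stability claimed in the subsection heading and connects directly back to Theorem \ref{theorem:stability} with the improved Lipschitz constant $L \leq q < 1$.
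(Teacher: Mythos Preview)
Your proposal is correct and mirrors the paper's own proof essentially step for step: recall the contraction inequality, substitute $x = u + \delta u$ and $y = u$, and simplify the right-hand side to $q\|\delta u\|$. Your additional observation that the smallness of $\delta u$ is interpretive rather than technically necessary is also in the spirit of the paper's discussion.
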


\begin{proof}
We aim to show that if $\mathcal{G}_\theta$ is a contraction mapping on a metric space $(X, \| \cdot \|)$ with contraction constant $q \in [0,1)$, then for any $u \in X$ and any perturbation $\delta u \in X$, the following inequality holds:
\[
\| \mathcal{G}_\theta(u + \delta u) - \mathcal{G}_\theta(u) \| \leq q \| \delta u \|.
\]

We start by defining the contraction mapping. A mapping $\mathcal{G}_\theta: X \to X$ is called a contraction mapping if there exists a constant $q \in [0,1)$ such that for all $x, y \in X$,
\[
\| \mathcal{G}_\theta(x) - \mathcal{G}_\theta(y) \| \leq q \| x - y \|.
\]

Let $u \in X$ be any point in the metric space, and let $\delta u \in X$ be a perturbation. We consider the images of $u$ and $u + \delta u$ under the mapping $\mathcal{G}_\theta$.

Applying the contraction property to $x = u + \delta u$ and $y = u$, we have:
\[
\| \mathcal{G}_\theta(u + \delta u) - \mathcal{G}_\theta(u) \| \leq q \| (u + \delta u) - u \| = q \| \delta u \|.
\]

This inequality directly shows that the change in the output of $\mathcal{G}_\theta$ due to the perturbation $\delta u$ is at most $q$ times the magnitude of the perturbation. Since $q < 1$, the mapping $\mathcal{G}_\theta$ attenuates the effect of the perturbation.

So far, the lemma demonstrates that $\mathcal{G}_\theta$ is Lipschitz continuous with Lipschitz constant $q$:
\[
\| \mathcal{G}_\theta(u + \delta u) - \mathcal{G}_\theta(u) \| \leq q \| \delta u \|.
\]
This property implies stability with respect to input perturbations, meaning that small changes in the input $u$ result in proportionally smaller changes in the output $\mathcal{G}_\theta(u)$. This is crucial for ensuring that errors or uncertainties in the input do not amplify through the mapping, which is particularly important in iterative methods and numerical computations.

\end{proof}

Moreover, the exponential convergence to the fixed point reduces computational effort by potentially decreasing the number of iterations or layers required to achieve a desired level of accuracy.

\begin{theorem}[Reduction in Iterations Needed for Convergence]\label{theorem:iteration_reduction}
Let $\epsilon > 0$ be the desired accuracy. The number of iterations $n$ required to achieve $\| \mathcal{G}_\theta^n(u) - u^* \| \leq \epsilon$ is bounded by:
\[
n \geq \frac{\ln \left( \frac{\| u - u^* \|}{\epsilon} \right)}{\ln \left( \frac{1}{q} \right)}.
\]
\end{theorem}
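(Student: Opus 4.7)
The plan is to reduce this bound directly to the exponential convergence estimate stated in Theorem \ref{theorem:convergence}. By hypothesis (inherited from the contraction setting of Corollary \ref{corollary:spectral_norm}), $\mathcal{G}_\theta$ is a contraction with constant $q \in (0,1)$, so Theorem \ref{theorem:convergence} already yields $\|\mathcal{G}_\theta^n(u) - u^*\|_{H^t(D)} \leq q^n \|u - u^*\|_{H^t(D)}$. Therefore, to guarantee the accuracy $\|\mathcal{G}_\theta^n(u) - u^*\| \leq \epsilon$, it is sufficient to force the right-hand side below $\epsilon$, i.e., to solve $q^n \|u - u^*\| \leq \epsilon$ for $n$.

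The key steps, in order, are: (i) dispose of the trivial case $u = u^*$, where any $n \geq 0$ works and the stated bound is vacuous (interpret $\ln(0/\epsilon)$ as $-\infty$); (ii) in the nontrivial case $\|u - u^*\| > 0$, rewrite the sufficient condition as $q^n \leq \epsilon/\|u - u^*\|$; (iii) take the natural logarithm, noting that $\ln q < 0$ because $q \in (0,1)$, so the inequality direction reverses, producing $n \ln q \leq \ln(\epsilon/\|u - u^*\|)$; (iv) divide by $\ln q < 0$, reversing the inequality once more, and rewrite $-\ln(\epsilon/\|u-u^*\|) = \ln(\|u-u^*\|/\epsilon)$ and $-\ln q = \ln(1/q)$ to obtain the stated lower bound on $n$.

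There is essentially no analytic obstacle: the argument is a one-line manipulation of the geometric decay guaranteed by Theorem \ref{theorem:convergence}. The only subtlety worth commenting on is the sign bookkeeping when taking logarithms of a quantity less than one, and the mild observation that the bound is sharp in the sense that the estimate $q^n\|u-u^*\|$ from Theorem \ref{theorem:convergence} is the tightest one available from the contraction hypothesis alone, so the logarithmic dependence on $1/\epsilon$ and on $1/q$ is unavoidable.

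Finally, I would close the proof with a brief interpretive remark emphasizing what the formula tells us practically: the required depth (or number of iterative applications) grows only logarithmically in $1/\epsilon$, and inversely with $\ln(1/q)$, so making $q$ substantially smaller than one, for example via the spectral normalization in Corollary \ref{corollary:spectral_norm}, yields a direct reduction in the number of layers needed to reach a prescribed tolerance. This ties the computational payoff of the design recommendation explicitly back to the contraction constant that the architectural constraints control.
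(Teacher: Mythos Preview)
Your proposal is correct and follows essentially the same approach as the paper: invoke the exponential decay estimate $\|\mathcal{G}_\theta^n(u)-u^*\|\le q^n\|u-u^*\|$, impose $q^n\|u-u^*\|\le\epsilon$, and solve for $n$ by taking logarithms with careful sign bookkeeping. The only minor differences are that you cite Theorem~\ref{theorem:convergence} directly rather than re-deriving the geometric bound from the contraction property, and you explicitly dispose of the trivial case $u=u^*$; both are harmless refinements.
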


\begin{proof}
We aim to determine a bound on the number of iterations $n$ required for the iterated mapping $\mathcal{G}_\theta^n(u)$ to approximate the fixed point $u^*$ within a desired accuracy $\epsilon > 0$, i.e.,
\[
\| \mathcal{G}_\theta^n(u) - u^* \| \leq \epsilon.
\]

Recall that a contraction mapping $\mathcal{G}_\theta$ on a complete metric space $(X, \| \cdot \|)$ has a unique fixed point $u^* \in X$ satisfying $\mathcal{G}_\theta(u^*) = u^*$. Moreover, the sequence $\{ \mathcal{G}_\theta^n(u) \}_{n=0}^\infty$, where $\mathcal{G}_\theta^n$ denotes the $n$-fold composition of $\mathcal{G}_\theta$, converges to $u^*$ for any initial point $u \in X$.

The contraction property ensures that:
\[
\| \mathcal{G}_\theta(u) - \mathcal{G}_\theta(v) \| \leq q \| u - v \|, \quad \text{for all } u, v \in X,
\]
where $q \in [0,1)$ is the contraction constant.

We first establish the rate at which the iterates $\mathcal{G}_\theta^n(u)$ converge to $u^*$. Using the contraction property repeatedly, we have:
\begin{align*}
\| \mathcal{G}_\theta^n(u) - u^* \| &= \| \mathcal{G}_\theta^n(u) - \mathcal{G}_\theta^n(u^*) \| \\
&\leq q \| \mathcal{G}_\theta^{n-1}(u) - u^* \| \\
&\leq q^2 \| \mathcal{G}_\theta^{n-2}(u) - u^* \| \\
&\ \vdots \\
&\leq q^n \| u - u^* \|.
\end{align*}

To achieve the desired accuracy $\epsilon$, we require:
\[
\| \mathcal{G}_\theta^n(u) - u^* \| \leq q^n \| u - u^* \| \leq \epsilon.
\]
Rewriting the inequality:
\[
q^n \leq \frac{\epsilon}{\| u - u^* \|}.
\]
Taking the natural logarithm on both sides:
\[
\ln q^n \leq \ln \left( \frac{\epsilon}{\| u - u^* \|} \right).
\]
Simplifying:
\[
n \ln q \leq \ln \epsilon - \ln \| u - u^* \|.
\]
Since $\ln q < 0$ (because $0 \leq q < 1$), we multiply both sides by $-1$ (which reverses the inequality direction):
\[
- n \ln q \geq \ln \| u - u^* \| - \ln \epsilon.
\]
Recognizing that $- \ln q = \ln \left( \frac{1}{q} \right)$, we have:
\[
n \ln \left( \frac{1}{q} \right) \geq \ln \left( \frac{\| u - u^* \|}{\epsilon} \right).
\]
Solving for $n$, we obtain:
\[
n \geq \frac{ \ln \left( \frac{\| u - u^* \|}{\epsilon} \right) }{ \ln \left( \frac{1}{q} \right) }.
\]

Generally, this inequality provides a lower bound on the number of iterations $n$ required to achieve an approximation error less than or equal to $\epsilon$. The bound depends logarithmically on the ratio $\frac{\| u - u^* \|}{\epsilon}$ and inversely on $\ln \left( \frac{1}{q} \right)$. A smaller contraction constant $q$ (i.e., closer to zero) results in a larger denominator, thus reducing the required number of iterations $n$.

\end{proof}

This shows that a smaller contraction constant $q$ leads to fewer iterations needed for convergence.

\vspace{1.5pt}

\subsection{Integrate Multi-Scale Representations}

Combining global (Fourier) and local (wavelet) representations allows the neural operator to capture features at multiple scales, enhancing its ability to approximate complex functions with varying spatial frequencies.

Employing both Fourier and wavelet transforms enables efficient representation of functions with features spanning various spatial frequencies \cite{daubechies1992ten}. This multi-scale approach aligns with the clustering behavior in function space and enhances the operator's capacity to approximate complex solution mappings.

We formalize this with the following theorem:

\begin{theorem}[Approximation Using Combined Bases]\label{theorem:combined_bases}
Any function $f \in L^2(D)$ can be approximated arbitrarily well using a finite combination of Fourier and wavelet basis functions.
\end{theorem}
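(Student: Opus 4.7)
\quad The plan is to reduce the statement to the known completeness of each basis in $L^2(D)$ and then combine them via the triangle inequality. Fix $f \in L^2(D)$ and $\varepsilon > 0$, and let $\{\phi_k\}_{k \in \mathbb{Z}^d}$ denote a Fourier orthonormal basis of $L^2(D)$ (for instance, the complex exponentials on a rectangular $D$ with periodic boundary conditions) and $\{\psi_{j,l}\}$ denote an orthonormal wavelet basis arising from a multiresolution analysis, such as a Daubechies construction as cited in \cite{daubechies1992ten}.

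First I would note that completeness of either system alone already suffices for the bare existence claim: since $\{\phi_k\}$ is complete, Parseval gives $\sum_k |\langle f,\phi_k\rangle|^2 = \|f\|_{L^2(D)}^2$, so a finite partial sum $S_N f = \sum_{|k|\le N} \langle f,\phi_k\rangle \phi_k$ satisfies $\|f - S_N f\|_{L^2(D)} < \varepsilon$ for $N$ sufficiently large, and $S_N f$ is \emph{a fortiori} a finite combination of elements drawn from the union of the two bases.

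A more informative decomposition, better matching the multi-scale motivation advertised in the surrounding text, proceeds as follows. I would split $f = f_{\mathrm{low}} + f_{\mathrm{hi}}$, where $f_{\mathrm{low}} = P_{V_J} f$ is the orthogonal projection onto the MRA approximation space $V_J$ at a sufficiently fine scale $J$, and $f_{\mathrm{hi}} = f - f_{\mathrm{low}}$ captures the finer-scale residual. Then I would approximate $f_{\mathrm{low}}$ by a finite Fourier partial sum $\tilde{f}_F$ to accuracy $\varepsilon/2$ (justified because $f_{\mathrm{low}}$ lives in a subspace spanned by smooth, compactly supported scaling functions, for which Fourier series converge rapidly), and approximate $f_{\mathrm{hi}}$ by a finite wavelet sum $\tilde{f}_W = \sum_{(j,l)\in\Lambda} \langle f,\psi_{j,l}\rangle \psi_{j,l}$ with $\|f_{\mathrm{hi}} - \tilde{f}_W\|_{L^2(D)} < \varepsilon/2$ by Parseval applied to the wavelet expansion. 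The triangle inequality then yields $\|f - (\tilde{f}_F + \tilde{f}_W)\|_{L^2(D)} < \varepsilon$, and $\tilde{f}_F + \tilde{f}_W$ is by construction a finite linear combination of Fourier and wavelet basis functions.

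The hard part is essentially vacuous for the qualitative existence statement, since completeness of either basis alone already settles it. The genuine difficulty — and what multi-scale dictionaries actually buy — surfaces only in a quantitative refinement: bounding the number of terms required from each family when $f$ lies in a specific smoothness class such as a Sobolev or Besov space. That refinement would require invoking wavelet characterizations of function-space norms and comparing decay rates of Fourier versus wavelet coefficients, but it is not claimed in the statement above and so I would leave it outside the scope of the proof.
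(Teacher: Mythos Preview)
Your proposal is correct. Both you and the paper ultimately appeal to the $L^2$-completeness of the Fourier and wavelet systems, but the routes differ. The paper writes $f$ as the \emph{sum} of its full Fourier expansion and its full wavelet expansion, then truncates both series and argues that the tails vanish; this is formally redundant (each expansion already reproduces $f$ on its own, so the double expansion has non-unique coefficients) and the paper resolves it only heuristically by saying Fourier captures the smooth part while wavelets capture local features. You instead observe up front that completeness of either basis alone already settles the bare existence claim, and then --- to give the combination genuine content --- propose an explicit orthogonal splitting $f = P_{V_J}f + (f - P_{V_J}f)$ via an MRA projection, approximating the coarse piece by Fourier and the fine residual by wavelets. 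Your version is more honest about what the theorem actually asserts and makes the role of each basis precise rather than rhetorical; the paper's version is closer in spirit to the quantitative story developed later in its Theorem~\ref{theorem:approximation_error_multiscale}, but as a proof of the stated qualitative claim it is looser than yours.
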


\begin{proof}
See Appendix A.2.
\end{proof}

Implementing spectral convolution layers utilizing the Fast Fourier Transform (FFT) for global feature extraction \cite{li2020fourier}, and incorporating wavelet transform layers to capture local irregularities and singularities \cite{zhao2022wavelet}, allows for efficient computation.

\begin{lemma}[Efficient Computation with Multi-Scale Layers]\label{lemma:multiscale_layers}
The integration of Fourier and wavelet layers allows for efficient computation by leveraging the FFT and Discrete Wavelet Transform (DWT), both of which have computational complexity $\mathcal{O}(N \log N)$.
\end{lemma}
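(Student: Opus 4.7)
The plan is to decompose the lemma into two separate complexity claims---one for the FFT layer and one for the DWT layer---prove each bound individually, and then show that the composite multi-scale layer inherits the $\mathcal{O}(N\log N)$ bound via subadditivity of asymptotic costs. Since the lemma only concerns computational cost (not approximation quality, which was handled by Theorem \ref{theorem:combined_bases}), the argument is essentially algorithmic rather than analytic.

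For the FFT component I would invoke the Cooley--Tukey decimation-in-time recursion: writing the length-$N$ DFT ($N$ a power of two) as a combination of two length-$N/2$ DFTs on even- and odd-indexed entries gives the recurrence $T_{\mathrm{FFT}}(N) = 2\,T_{\mathrm{FFT}}(N/2) + \Theta(N)$, where the linear overhead comes from applying twiddle factors and merging the subresults. The Master Theorem (or direct unrolling) then yields $T_{\mathrm{FFT}}(N) = \mathcal{O}(N\log N)$. A spectral convolution layer post-multiplies in Fourier space by a learnable kernel, which adds only $\mathcal{O}(N)$ pointwise work, and applies an inverse FFT of the same order, so the full layer remains $\mathcal{O}(N\log N)$.

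For the DWT component I would appeal to Mallat's pyramid algorithm with a compactly supported wavelet of filter length $\ell$ (e.g.\ Daubechies bases as cited via \cite{daubechies1992ten}). At level $k$, the algorithm applies two FIR filters of length $\ell$ followed by downsampling, costing $\Theta(\ell \cdot N/2^{k-1})$ operations. Summing the geometric series over $k = 1, \ldots, \log_2 N$ yields total work $\sum_{k=1}^{\log_2 N} \Theta(N/2^{k-1}) = \mathcal{O}(N)$, which is bounded above by $\mathcal{O}(N\log N)$ as the lemma requires. (The same bound applies in reverse for the inverse DWT.)

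To finish, I would observe that a multi-scale layer that applies an FFT branch and a DWT branch in parallel (or sequentially) and then linearly combines their outputs incurs cost $T_{\mathrm{FFT}}(N) + T_{\mathrm{DWT}}(N) + \mathcal{O}(N)$, which is still $\mathcal{O}(N\log N)$. The main obstacle I anticipate is not any of the individual bounds---each is textbook---but rather fixing a precise computational model and specifying what the ``integration'' of Fourier and wavelet layers is, because in $d$ spatial dimensions the natural tensor-product FFT costs $\mathcal{O}(N\log N)$ in the total grid size $N$ while the tensor-product DWT is $\mathcal{O}(N)$, and one must be careful that implicit constants (filter length, kernel truncation, number of retained modes) do not smuggle in hidden factors that would break the stated bound. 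I would therefore state the computational model explicitly (grid of total size $N$, compactly supported wavelet, fixed mode truncation) before combining the two estimates.
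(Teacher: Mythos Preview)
Your proposal is correct and follows essentially the same approach as the paper's proof: both invoke the Cooley--Tukey recursion to obtain the $\mathcal{O}(N\log N)$ FFT bound (with convolution handled via the convolution theorem plus pointwise multiplication and inverse FFT), cite Mallat's pyramid algorithm to get $\mathcal{O}(N)$ for the DWT, and conclude that the combined layer is dominated by the FFT term. Your write-up is in fact a bit more rigorous---you make the recurrence and Master Theorem explicit, sum the geometric series for the DWT, and flag the computational-model subtleties---whereas the paper simply asserts the standard complexity facts with citations.
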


\begin{proof}
See Appendix A.2.
\end{proof}

Integrating multi-scale representations enhances the neural operator's ability to model functions with sharp transitions or localized features, leading to improved approximation accuracy.

\begin{theorem}[Improved Approximation Error with Multi-Scale Representations]\label{theorem:approximation_error_multiscale}
Let $f \in L^2(D)$ be a function with both smooth and localized features. A neural operator employing multi-scale representations can approximate $f$ with an error $\epsilon$ that decreases exponentially with the number of basis functions used.
\end{theorem}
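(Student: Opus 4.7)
The plan is to decompose $f$ into a smooth global component and a localized component, approximate each with the basis to which it is best suited, and then combine the two estimates via the triangle inequality. Specifically, I would write $f = f_s + f_l$, where $f_s$ captures the smooth (analytic or high-regularity Sobolev) part of $f$ and $f_l$ captures the localized features (singularities, sharp transitions, compactly supported detail).

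For the smooth part, I would invoke the classical spectral-convergence estimate: if $f_s$ admits an analytic (or Gevrey) extension to a neighborhood of $D$, then truncating its Fourier series after $N_F$ modes gives
\[
\|f_s - P_{N_F} f_s\|_{L^2(D)} \leq C_1 e^{-\alpha N_F},
\]
with $\alpha$ controlled by the width of the strip of analyticity. For the localized part, I would appeal to best $N$-term wavelet approximation theory for piecewise-smooth or Besov-class functions, which yields an adaptive selection of $N_W$ wavelet coefficients satisfying
\[
\|f_l - Q_{N_W} f_l\|_{L^2(D)} \leq C_2 e^{-\beta N_W}.
\]
The triangle inequality then gives, for $N = N_F + N_W$ and any fixed split $N_F = \lambda N$, $N_W = (1-\lambda)N$,
\[
\|f - (P_{N_F} f_s + Q_{N_W} f_l)\|_{L^2(D)} \leq C e^{-\gamma N},
\]
with $\gamma = \min(\lambda \alpha, (1-\lambda)\beta)$, maximized at $\lambda = \beta/(\alpha+\beta)$. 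Since Theorem \ref{theorem:combined_bases} supplies the combined basis and Theorem \ref{theorem:universality} guarantees that a neural operator can realize the linear map $f \mapsto P_{N_F} f_s + Q_{N_W} f_l$ to arbitrary accuracy, the neural operator inherits the exponential rate.

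The main obstacle is the regularity hypothesis hidden in the phrase \emph{smooth and localized features}. Genuine exponential decay (as opposed to merely algebraic) requires the smooth component to be analytic or at least Gevrey, and the localized component to possess a sparse wavelet representation with exponential best-$N$-term rates (for instance, piecewise analytic with finitely many singular points). Without such hypotheses the best one can prove is polynomial convergence, so the proof must either incorporate these assumptions explicitly or read the statement as defining the class of functions to which it applies. A secondary but less serious issue is controlling the constant $C$ under the optimal split of resources between the two transforms; this is a balancing argument that does not affect the exponential character of the bound but does determine the effective rate $\gamma$.
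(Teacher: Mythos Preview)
Your proposal is sound and takes a genuinely different route from the paper's argument. The paper does \emph{not} decompose $f$ into smooth and localized pieces; instead it expands $f$ directly in the combined Fourier--wavelet basis, applies Parseval's identity to write $\epsilon^2 = \sum_{|k|>K}|c_k|^2 + \sum_{j>J}\sum_m |d_{j,m}|^2$, and then invokes separate decay estimates for the two tails (smoothness-based decay $|c_k|\le C|k|^{-s}$ for the Fourier side, Besov-type decay for the wavelet side). The truncation levels $K$ and $J$ are then chosen as powers of the total budget $N$ to balance the two contributions.

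Your splitting $f=f_s+f_l$ and explicit appeal to analytic or Gevrey regularity for the smooth component is the more honest path to a genuinely exponential rate: as you correctly flag, finite Sobolev smoothness yields only algebraic Fourier-coefficient decay, and the paper's own bound $\sum_{|k|>K}|c_k|^2 \le C' K^{-(2s-1)}$ is polynomial in $K$, so its subsequent claim of exponential decrease in $N$ is not actually supported by the estimates it derives. Your approach buys a clean and correct statement of the regularity hypotheses under which the theorem holds; the paper's approach buys simplicity (no need to exhibit the splitting $f_s+f_l$ or to argue that such a decomposition exists) at the cost of leaving the exponential rate unjustified under the assumptions it actually writes down. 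Your closing caveat about the hidden hypotheses is therefore not a weakness of your proof but an accurate diagnosis of what the theorem must mean for the conclusion to be true.
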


\begin{proof}
See Appendix A.2.
\end{proof}

This demonstrates that multi-scale representations can achieve lower approximation errors more efficiently than single-scale methods.

\vspace{1.5pt}

\subsection{Ensure Universal Approximation Capability}

Increasing the network capacity appropriately ensures sufficient depth and width for approximating complex operators. The Universal Approximation Theorem for operators indicates that a neural operator with sufficient capacity can approximate any continuous operator to arbitrary precision on compact subsets of the input space \cite{chen1995universal, lu2019deeponet}.

By increasing the depth and width of the neural network, we enhance its capacity to approximate complex functions. Specifically, we have:

\begin{theorem}[Capacity Growth with Network Size]\label{theorem:capacity_growth}
The expressive capacity of a neural network grows exponentially with depth and polynomially with width \cite{raghu2017expressive}.
\end{theorem}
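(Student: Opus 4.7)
The plan is to quantify expressive capacity via the maximum number of linear regions that a piecewise-linear network can induce on its input domain, following the line of analysis developed by Montufar et al., Telgarsky, and Raghu et al. Since standard activations such as ReLU yield continuous piecewise-linear networks, this region count gives a clean, geometrically interpretable measure of how rich the family of representable functions is, and it is amenable to direct combinatorial bounds in terms of the depth $L$ and the width $W$.

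First I would fix the setting: a feedforward network with $L$ hidden layers of width at most $W$, ReLU activations, mapping $\mathbb{R}^{n_0}$ to the output space, and I would define the expressive capacity $\mathcal{R}(L,W)$ as the supremum, over all weight choices, of the number of full-dimensional polyhedral regions on which the network is affine. Second, I would analyze the effect of a single layer: given that the preactivation on layer $\ell$ is affine on each of $R$ existing regions, I would show via Zaslavsky's theorem on hyperplane arrangements that each such region can be subdivided into at most $O(W^{n_0})$ new regions, yielding the recursion $\mathcal{R}_\ell \leq c\, W^{n_0}\, \mathcal{R}_{\ell-1}$. Third, unrolling this recursion over the $L$ layers produces an upper bound of the form $\mathcal{R}(L,W) \leq (c\, W^{n_0})^{L}$, which is manifestly exponential in $L$ and polynomial in $W$ for fixed input dimension. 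Finally, to show this rate is essentially tight, I would exhibit a depth-$L$ sawtooth or folding construction achieving $\Omega(W^{L})$ distinct linear regions, thereby certifying that no sub-exponential-in-$L$ bound can hold.

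The main obstacle I anticipate is the region-counting step in the single-layer analysis. The difficulty lies in accurately accounting for how the $W$ new hyperplanes introduced by the ReLU activations interact with the existing polyhedral partition inherited from previous layers: not every prior region is split by every hyperplane, the hyperplanes may be in general position or degenerate, and their intersections with low-dimensional faces must be handled separately. Resolving this cleanly requires invoking Zaslavsky's chamber-counting formula and carefully tracking dimension-dependent constants. Once this combinatorial step is in place, iterating the recursion and constructing the explicit lower-bound network are comparatively routine, and together they yield the claimed exponential-in-depth, polynomial-in-width growth of expressive capacity.
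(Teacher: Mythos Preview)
Your proposal is correct and shares the paper's core approach: both measure expressive capacity by the number of linear regions induced by a ReLU network and invoke the Mont\'ufar et al.\ line of results to conclude exponential-in-depth, polynomial-in-width growth. The paper's proof is essentially a citation of the Mont\'ufar et al.\ lower bound $R \geq (n/n_0)^{n_0} n^{(L-1)n_0}$, a mention of the Serra et al.\ refinement, and a reference to the trajectory-length analysis of Raghu et al.; it does not derive any of these bounds from first principles. Your plan is more self-contained and two-sided: you propose to derive an \emph{upper} bound via Zaslavsky's hyperplane-arrangement count together with a layer-wise recursion, and then match it with an explicit sawtooth/folding \emph{lower} bound. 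This buys you a genuine proof rather than a literature summary, and the matching upper bound is something the paper never attempts. The only caveat is that the theorem as stated is informal enough that the paper's citation-based argument suffices for its purposes, whereas your version would actually prove a sharp theorem.
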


\begin{proof}
See Appendix A.3.
\end{proof}

Using activation functions capable of representing complex mappings, such as ReLU or Tanh, facilitates universal approximation \cite{hornik1989multilayer}.

Enhancing the network's capacity allows the neural operator to approximate more complex solution mappings with higher precision. However, increasing capacity improves approximation accuracy but also increases the risk of overfitting. Regularization techniques must be employed to mitigate this risk.

\begin{lemma}[Trade-off Between Capacity and Overfitting]\label{lemma:capacity_overfitting}
While increasing capacity improves approximation accuracy, it also increases the risk of overfitting. Regularization techniques must be employed to mitigate this risk.
\end{lemma}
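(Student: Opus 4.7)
The statement splits into three sub-claims: (i) larger networks can attain smaller training/approximation error, (ii) larger networks incur a larger generalization gap, and (iii) regularization controls that gap. My strategy is to prove each piece by reducing it to results already in the excerpt rather than introducing new machinery. For (i), I would invoke universality (Theorem~\ref{theorem:universality}) together with Theorem~\ref{theorem:capacity_growth}: the hypothesis class of neural operators of depth $N$ and width $W$ is nested in the class of depth $N'>N$ and width $W'>W$, so the best-achievable empirical risk is monotonically nonincreasing in $(N,W)$, and universality drives it to zero on any compact subset of $H^s(D)$.

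\textbf{Quantifying the gap.} For (ii), I would lean on Theorem~\ref{theorem:generalization}, which bounds the expected loss by the empirical loss plus a term proportional to the Lipschitz constant $L$ of $\mathcal{G}_\theta$. Theorem~\ref{theorem:lipschitz_nn} gives $L\le \bigl(\prod_{i=1}^{N}L_i\bigr)L_\sigma^{N}$, so with no constraints on the weight matrices $L$ may grow (worst case) exponentially in $N$ and multiplicatively in the layer norms. Substituting this into the generalization bound yields
\[
\mathbb{E}_{u\sim\mathcal{D}}[\ell(\mathcal{G}_\theta(u),\mathcal{T}(u))]
\;\le\;
\frac{1}{N}\sum_{i=1}^N \ell(\mathcal{G}_\theta(u_i),\mathcal{T}(u_i))
+\Bigl(\prod_{i=1}^{N}L_i\Bigr)L_\sigma^{N}\sqrt{\tfrac{\ln(1/\delta)}{2N}},
\]
whose right-hand side makes the degradation of the worst-case gap as capacity grows precise. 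This is exactly the "risk of overfitting" the lemma asserts.

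\textbf{Neutralizing the gap.} For (iii), I would point directly to Corollary~\ref{corollary:spectral_norm}: enforcing $\|W_i\|\le q^{1/N}$ with $L_\sigma\le 1$ caps the overall Lipschitz constant at $q<1$ independently of depth or width. Re-inserting this bounded $L$ into Theorem~\ref{theorem:generalization} makes the generalization term $\mathcal{O}(\sqrt{\ln(1/\delta)/N})$, uniformly in network size, so combining (i) with this capped gap proves that regularization reconciles the two opposing effects.

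\textbf{Main obstacle.} The delicate point is the informal phrase "risk of overfitting," which is not a single sharp inequality. My plan is to interpret it as monotone degradation of the upper bound in Theorem~\ref{theorem:generalization}, the cleanest handle available from the earlier framework. A stronger statement, namely a matching lower bound on the gap as a function of capacity, would require Rademacher-complexity or VC-style tools that the excerpt does not develop; I will therefore be explicit that (ii) is a statement about the upper bound deteriorating, while (iii) shows this deterioration can be fully absorbed by spectral constraints.
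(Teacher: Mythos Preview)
Your argument is valid but takes a genuinely different route from the paper. The paper's proof invokes the classical bias--variance decomposition: it writes the expected risk as $\text{Bias}^2+\text{Variance}+\sigma^2$, argues qualitatively that enlarging the hypothesis class $\mathcal{H}$ lowers bias while raising variance, characterizes overfitting as $R_{\text{emp}}\downarrow$ with $R\uparrow$, and then lists standard regularizers (weight decay, $L_1$, dropout) that constrain $\mathcal{H}$. It does not touch Theorems~\ref{theorem:generalization} or~\ref{theorem:lipschitz_nn} or Corollary~\ref{corollary:spectral_norm} at all. You, by contrast, stay entirely inside the paper's own machinery: nested hypothesis classes plus universality for (i), the Lipschitz product bound fed into the generalization inequality for (ii), and spectral normalization to cap $L$ for (iii). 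What your route buys is an explicit, quantitative expression for how the worst-case gap scales with depth and layer norms, and a single concrete mechanism for (iii) rather than a list; what the paper's route buys is brevity and a direct appeal to a textbook tool, at the cost of importing a decomposition not developed elsewhere in the text.

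Two points to tighten before this can stand as written. First, the constant $L$ in Theorem~\ref{theorem:generalization} is stated as the Lipschitz constant of $\mathcal{G}_\theta$ \emph{with respect to $\theta$}, whereas Theorem~\ref{theorem:lipschitz_nn} bounds the Lipschitz constant with respect to the \emph{input} $u$; these are not the same quantity, so you need either a line arguing that the $\theta$-Lipschitz constant also scales with $\prod_i\|W_i\|$ (which is true under mild assumptions but not stated anywhere in the excerpt) or an explicit acknowledgment that you are reinterpreting the bound. Second, your displayed inequality overloads the symbol $N$ for both the layer count and the sample size; rename one of them to avoid the clash.
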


\begin{proof}
See Appendix A.3.
\end{proof}

Balancing network capacity with appropriate regularization leads to better performance.

\vspace{1.5pt}

\subsection{Enhance Generalization through Regularization}

Applying regularization techniques such as weight decay, dropout, or spectral normalization controls the complexity of the neural operator and prevents overfitting.

Regularization techniques constrain the effective capacity of the neural operator, mitigating overfitting and improving generalization to unseen data \cite{goodfellow2016deep}.

Implement weight decay by adding a penalty term to the loss function:

\begin{equation}\label{eq:weight_decay}
L_{\text{total}} = L_{\text{data}} + \lambda \sum_{i} \| W_i \|_F^2,
\end{equation}

where $L_{\text{data}}$ is the original loss, $\lambda$ is the regularization parameter, and $\| W_i \|_F$ is the Frobenius norm of weight matrix $W_i$.

\begin{theorem}[Effectiveness of Weight Decay]\label{theorem:weight_decay}
Weight decay reduces the effective capacity of the neural network by penalizing large weights, which helps prevent overfitting \cite{krogh1992simple}.
\end{theorem}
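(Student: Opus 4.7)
The plan is to prove the theorem by showing that penalizing large weights tightens the Lipschitz constant of $\mathcal{G}_\theta$, which in turn shrinks the effective hypothesis class and therefore the generalization gap. First, I would analyze the minimizer $\theta^*$ of $L_{\text{total}}$. Since $L_{\text{data}} \geq 0$, any minimizer must satisfy $\lambda \sum_i \|W_i\|_F^2 \leq L_{\text{total}}(\theta^*) \leq L_{\text{total}}(0)$, yielding an a priori bound $\sum_i \|W_i\|_F^2 \leq L_{\text{data}}(0)/\lambda$. This shows that weight decay confines the learned parameters to a norm-ball whose radius shrinks as $\lambda$ grows.

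Next, I would translate this parameter bound into a capacity bound. Using the submultiplicativity of operator norms together with Theorem~\ref{theorem:lipschitz_nn}, the Lipschitz constant of $\mathcal{G}_\theta$ satisfies $L(\theta) \leq L_\sigma^N \prod_i \|W_i\| \leq L_\sigma^N \prod_i \|W_i\|_F$. Combined with the AM-GM inequality applied to the weight-decay constraint, this furnishes an explicit upper bound on $L(\theta^*)$ that decreases as $\lambda$ increases. I would formalize ``effective capacity'' through the Lipschitz-parameterized hypothesis class $\mathcal{F}_\lambda := \{\mathcal{G}_\theta : L(\theta) \leq L(\lambda)\}$ and note that $\mathcal{F}_{\lambda_1} \subseteq \mathcal{F}_{\lambda_2}$ whenever $\lambda_1 \geq \lambda_2$, so larger $\lambda$ yields a strictly smaller function class.

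To close the loop with overfitting, I would invoke Theorem~\ref{theorem:generalization}: the generalization bound scales with the Lipschitz constant $L$, so the smaller Lipschitz constant induced by weight decay directly reduces the high-probability upper bound on $\mathbb{E}_{u\sim\mathcal{D}}[\ell(\mathcal{G}_\theta(u),\mathcal{T}(u))] - \tfrac{1}{N}\sum_i \ell(\mathcal{G}_\theta(u_i),\mathcal{T}(u_i))$. Concretely, one obtains a bound of the form $L_\sigma^N (L_{\text{data}}(0)/(\lambda N))^{N/2} \sqrt{\ln(1/\delta)/(2N)}$, which decreases monotonically in $\lambda$.

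The main obstacle will be making the notion of ``effective capacity'' mathematically precise in a way that is compatible with the tools already developed in this paper. Rigorous approaches via Rademacher complexity or covering numbers would require introducing machinery not present in the earlier sections, so I would favor the Lipschitz-based route above, which plugs directly into Theorems~\ref{theorem:lipschitz_nn} and \ref{theorem:generalization}. A secondary subtlety is handling the bias terms and the compositional structure carefully, since the Frobenius norm only upper-bounds the spectral norm and the bound can become loose for deep networks; I would address this by stating the capacity reduction as an inequality rather than an equality, which suffices to establish the qualitative claim of the theorem.
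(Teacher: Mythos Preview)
Your proposal is sound in outline but follows a quite different route from the paper's own argument. The paper's proof is essentially heuristic: it writes down the regularized loss $L_{\text{reg}}(\theta)=L(\theta)+\tfrac{\lambda}{2}\|\theta\|_2^2$, computes the gradient to exhibit the shrinkage term $\lambda\theta$ in the update rule, and then appeals to the bias--variance decomposition to argue that smaller weights reduce variance at a modest cost in bias, closing with an analogy to Ridge Regression. There is no explicit capacity measure, no use of Theorem~\ref{theorem:lipschitz_nn} or Theorem~\ref{theorem:generalization}, and no $\lambda$-dependent bound. Your approach is more quantitative and better wired into the surrounding framework: the a~priori bound $\sum_i\|W_i\|_F^2\le L_{\text{data}}(0)/\lambda$, the passage to a Lipschitz bound via Theorem~\ref{theorem:lipschitz_nn} and AM--GM, and the nested hypothesis classes $\mathcal{F}_\lambda$ give a concrete, monotone-in-$\lambda$ control on the generalization gap that the paper's bias--variance narrative does not supply.

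One genuine gap to watch: the constant $L$ in Theorem~\ref{theorem:generalization} is stated as the Lipschitz constant of $\mathcal{G}_\theta$ \emph{with respect to the parameters} $\theta$, whereas Theorem~\ref{theorem:lipschitz_nn} bounds the Lipschitz constant \emph{with respect to the input} $u$. These are different objects, so the direct substitution you propose in the final step is not literally licensed by the theorems as written. To make the argument go through you would need either an auxiliary lemma showing that the Frobenius-norm constraint also controls the parameter-Lipschitz constant of the loss (which it does, once the inputs and activations are bounded), or a Rademacher-complexity style bound driven by the input-Lipschitz constant, which, as you note, introduces machinery the paper does not develop. Either fix is routine, but the plug-in step as stated has a type mismatch that should be addressed explicitly.
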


\begin{proof}
See Appendix A.4.
\end{proof}

Apply dropout by randomly setting a fraction of the neurons' outputs to zero during training \cite{srivastava2014dropout}.

\begin{lemma}[Dropout Prevents Co-adaptation]\label{lemma:dropout}
Dropout reduces overfitting by preventing neurons from co-adapting on training data, leading to more robust features.
\end{lemma}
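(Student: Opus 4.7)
The plan is to interpret dropout as a stochastic regularizer and show that minimizing the expected training loss under dropout is equivalent (to leading order) to minimizing the original loss plus an implicit penalty that discourages neurons from relying on the joint presence of their peers. Formally, I would let $h(u;\theta)$ denote the pre-activation of a given layer, model dropout as the random rescaling $\tilde h_i = m_i h_i / p$ with $m_i \sim \mathrm{Bern}(p)$ independent across units, and study the dropout objective $\mathbb{E}_m[\ell(\mathcal{G}_\theta(u;m), \mathcal{T}(u))]$. The target is a decomposition of this expectation into the deterministic loss plus a non-negative term that is small precisely when neurons contribute additively rather than in concert.

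The steps, in order, would be: (i) fix notation for the masked network and verify that $\mathbb{E}_m[\tilde h_i] = h_i$, so dropout is unbiased at the pre-activation level; (ii) perform a second-order Taylor expansion of $\ell$ about the deterministic network output and take expectations, so that the linear term vanishes and the leading correction becomes $\tfrac12 \mathrm{tr}\!\left(\nabla^2 \ell \cdot \mathrm{Cov}_m[\mathcal{G}_\theta(u;m)]\right)$; (iii) compute the dropout covariance explicitly in the (generalized) linear case, following the argument of Wager--Wang--Liang, and observe that it decomposes into a sum of per-neuron variances plus cross-neuron covariance contributions weighted by outgoing weights; (iv) conclude that minimizing the dropout objective forces the optimizer both to shrink weight magnitudes, yielding an effective weight-decay-like term in the spirit of Theorem~\ref{theorem:weight_decay}, and to reduce correlations between co-activating neurons, which is the mathematical content of ``preventing co-adaptation''; and (v) invoke a standard capacity-control argument, for example a Rademacher-complexity or covering-number bound, to convert this reduced effective capacity into a tighter generalization gap, tying the conclusion back to Theorem~\ref{theorem:generalization}.

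I expect the main obstacle to be steps (iii)--(iv): the clean equivalence between dropout and an explicit penalty really only holds exactly for generalized linear models, so for the deep nonlinear setting the argument must either rely on the second-order Taylor expansion being a faithful local surrogate (requiring smoothness assumptions on $\ell$ and on the activations) or on an ensemble-averaging interpretation using Jensen's inequality applied to a convex loss to upper-bound $\mathbb{E}_m[\ell]$ by an ensemble loss plus a variance term. A secondary difficulty is supplying a mathematically crisp definition of ``co-adaptation''; I would adopt the working definition that two neurons are co-adapted if their joint contribution to the output decomposes non-additively, and demonstrate that the dropout penalty vanishes in the limit where those contributions become additive. With these choices in place, the remaining manipulations reduce to routine linear-algebra computations and a direct appeal to generalization bounds already cited in the excerpt.
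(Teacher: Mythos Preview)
Your proposal is substantially more rigorous than the paper's own argument, which is essentially a heuristic restatement of the intuition from Srivastava et al.: the paper defines the Bernoulli mask $\tilde h_i = h_i \zeta_i$ with $\zeta_i \sim \mathrm{Bern}(1-p)$, observes informally that random deactivation prevents any neuron from relying on the presence of specific peers, notes the test-time rescaling $h_i^{\text{test}} = q h_i$ so that $\mathbb{E}[\tilde h_i] = q h_i$, and concludes. There is no Taylor expansion, no explicit penalty term, no covariance computation, and no appeal to a capacity bound. Your route via the Wager--Wang--Liang quadratic approximation and an explicit dropout-covariance decomposition would actually furnish a quantitative statement where the paper offers only narrative, and it integrates naturally with Theorems~\ref{theorem:weight_decay} and~\ref{theorem:generalization}, which the paper's proof does not attempt to connect. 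The trade-off is that your argument requires smoothness of $\ell$ and of the activations and is exact only for generalized linear models, as you correctly flag, whereas the paper's informal treatment imposes no such hypotheses because it never makes a quantitative claim. Both are defensible for a lemma whose statement is itself qualitative, but yours is the one that could be sharpened into an honest theorem.
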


\begin{proof}
See Appendix A.4.
\end{proof}

Additionally, apply spectral normalization to limit the spectral norm of weight matrices, ensuring controlled Lipschitz constants \cite{miyato2018spectral}.

Regularization techniques lead to reduced overfitting, enhancing the neural operator's performance on unseen data.

\begin{theorem}[Improved Generalization with Regularization]\label{theorem:improved_generalization}
Regularized neural operators exhibit lower generalization error bounds compared to unregularized models.
\end{theorem}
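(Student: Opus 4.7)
The plan is to derive the improved generalization bound by applying Theorem~\ref{theorem:generalization} to two versions of $\mathcal{G}_\theta$---one regularized and one unregularized---and comparing the Lipschitz-dependent complexity term. Since the generalization gap in Theorem~\ref{theorem:generalization} is proportional to the Lipschitz constant $L$ of $\mathcal{G}_\theta$ in $\theta$, any mechanism that provably shrinks $L$ tightens the bound. The strategy is therefore to show that each regularizer discussed earlier in this subsection (weight decay, dropout, spectral normalization) either explicitly controls or effectively reduces the per-layer spectral norms $\|W_i\|$, and then invoke Theorem~\ref{theorem:lipschitz_nn} to conclude that the overall Lipschitz constant, call it $L_{\mathrm{reg}}$, is at most the unregularized value $L_{\mathrm{unreg}}$.

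Concretely, I would proceed in four steps. First, write two copies of the bound of Theorem~\ref{theorem:generalization}, one with constant $L_{\mathrm{reg}}$ and one with $L_{\mathrm{unreg}}$, so that the right-hand sides differ only in the prefactor of $\sqrt{\ln(1/\delta)/(2N)}$. Second, upper-bound the Lipschitz constant of each network using Theorem~\ref{theorem:lipschitz_nn}, reducing the task to comparing the products $\prod_i \|W_i\|$ in the two cases. Third, handle each regularizer in turn: for weight decay, use the KKT conditions of \eqref{eq:weight_decay} to argue (as in Theorem~\ref{theorem:weight_decay}) that $\|W_i\|_F$, and hence the spectral norm $\|W_i\| \le \|W_i\|_F$, shrinks monotonically as $\lambda$ grows; for spectral normalization, use the definitional bound $\|W_i\|\le 1$ directly; for dropout, appeal to its equivalence with an adaptive weight penalty (Lemma~\ref{lemma:dropout}) to obtain an expected contraction of the effective weights. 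Fourth, combine the per-layer reductions through Theorem~\ref{theorem:lipschitz_nn} to deduce $L_{\mathrm{reg}} \le L_{\mathrm{unreg}}$, and substitute this inequality into the generalization bound to obtain the desired comparison.

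The main obstacle will be that shrinking $L$ does not automatically shrink the overall right-hand side, since the empirical loss $\tfrac{1}{N}\sum_i \ell(\mathcal{G}_\theta(u_i), \mathcal{T}(u_i))$ can rise under regularization. To handle this, I would restrict attention to a regime in which $\lambda$ (or the dropout rate) is tuned so that the empirical loss of the regularized model matches that of the unregularized one up to $o(N^{-1/2})$, while the Lipschitz prefactor difference is $\Theta(1)$; then the decrease in the complexity term strictly dominates. A cleaner alternative, which I expect to be the technically delicate step, would be to reformulate the comparison through a Rademacher-complexity refinement of Theorem~\ref{theorem:generalization} whose complexity term depends directly on $L_{\mathrm{reg}}$, thereby making the improvement a structural consequence of the capacity control rather than of a particular choice of regularization parameter.
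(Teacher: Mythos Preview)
Your proposal is considerably more ambitious than what the paper actually does: the paper's entire ``proof'' of Theorem~\ref{theorem:improved_generalization} is the one-line appeal ``Follows from standard results in statistical learning theory \cite{mohri2018foundations}.'' There is no derivation, no explicit invocation of Theorem~\ref{theorem:generalization}, and no per-regularizer analysis. So your plan is not a variant of the paper's argument---it is a genuine attempt to supply the content that the paper simply outsources to a textbook citation.

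That said, there is one real mismatch in your sketch worth flagging. Theorem~\ref{theorem:generalization} states that the complexity term scales with the Lipschitz constant of $\mathcal{G}_\theta$ \emph{with respect to the parameters $\theta$}, whereas Theorem~\ref{theorem:lipschitz_nn} bounds the Lipschitz constant \emph{with respect to the input $u$}. These are different quantities, and your Step~2 (``upper-bound the Lipschitz constant of each network using Theorem~\ref{theorem:lipschitz_nn}'') conflates them. The fix is not difficult---the parameter-Lipschitz constant of a feedforward network is also controlled by products of layer spectral norms (together with bounds on the inputs and intermediate activations)---but you would need to state and prove that separately rather than invoke Theorem~\ref{theorem:lipschitz_nn} directly. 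Your self-identified obstacle (that shrinking $L$ can raise the empirical term) is also genuine, and your proposed resolution via a tuned-$\lambda$ regime or a Rademacher refinement is exactly the kind of argument the cited reference \cite{mohri2018foundations} would supply; in that sense your ``cleaner alternative'' essentially reconverges with the paper's deferral to standard learning-theory machinery.
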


\begin{proof}
Follows from standard results in statistical learning theory \cite{mohri2018foundations}.
\end{proof}

Controlling the Lipschitz constant via spectral normalization also contributes to stability.

\vspace{1.5pt}

\subsection{Optimize Computational Efficiency}

Implementing spectral methods and parallelization reduces computational complexity and exploits hardware capabilities. Efficient computational methods allow the neural operator to handle larger problem sizes and higher-dimensional PDEs without incurring prohibitive computational costs \cite{cohen2015expressive}.

Ensure that the neural operator architecture is compatible with GPU acceleration and distributed computing frameworks. Under ideal conditions, parallel computing can achieve a speedup proportional to the number of processing units, up to the limits imposed by Amdahl's Law.

\begin{theorem}[Speedup with Parallel Computing]\label{theorem:parallel_speedup}
Under ideal conditions, the speedup $S$ achievable by parallel computing is:
\[
S = N,
\]
where $N$ is the number of processors, assuming perfect parallelization.
\end{theorem}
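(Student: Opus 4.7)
The plan is to derive the stated identity $S = N$ as the ideal-case limit of Amdahl's Law, which expresses the speedup of a computation partially parallelized across $N$ processors in terms of the serial fraction. First I would fix notation by letting $T_1$ denote the total execution time on a single processor and decomposing the workload into a serial portion of relative size $1 - P$ and a parallelizable portion of relative size $P \in [0,1]$, so that $T_1 = (1-P)T_1 + P T_1$. Under the standard cost model, distributing the parallelizable portion evenly across $N$ identical processors yields an execution time of $T_N = (1-P)T_1 + \frac{P}{N}T_1$, which I would then introduce as the operational definition of the parallel runtime.

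Next, I would invoke the textbook definition of speedup, $S = T_1 / T_N$, and substitute the expression for $T_N$ to obtain
\[
S \;=\; \frac{T_1}{(1-P)T_1 + \frac{P}{N}T_1} \;=\; \frac{1}{(1-P) + \frac{P}{N}},
\]
which is Amdahl's Law in its standard form. The hypothesis of the theorem, namely perfect parallelization, would then be formalized as the assumption that the entire workload is parallelizable, i.e.\ $P = 1$ and $1-P = 0$, reflecting the absence of inherently sequential portions, synchronization overhead, or communication cost.

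Finally, I would substitute $P = 1$ into the formula above, obtaining
\[
S \;=\; \frac{1}{0 + \frac{1}{N}} \;=\; N,
\]
which is the claimed identity. To close the argument, I would briefly note that this value coincides with the trivial upper bound on speedup in the shared-memory PRAM model, since $N$ processors can perform at most $N$ times the work per unit time as a single processor of the same type.

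The main obstacle is not technical but definitional: the theorem only holds under a clean cost model that excludes communication latency, memory contention, load imbalance, and synchronization barriers. I would therefore be careful in the setup to state explicitly that $T_N$ measures only arithmetic work distributed with no overhead, so that the passage from Amdahl's Law to $S = N$ is a genuine limit rather than an asymptotic approximation. Apart from this modeling care, the algebra reduces to a one-line substitution, and no subtle convergence or measure-theoretic issues arise.
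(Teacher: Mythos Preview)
Your proposal is correct and follows essentially the same route as the paper: both decompose the single-processor time into serial and parallelizable fractions, define $T_N = (1-P)T_1 + \frac{P}{N}T_1$, and obtain Amdahl's formula $S = \frac{1}{(1-P) + P/N}$. The only minor difference is that you finish by substituting $P = 1$ to reach the stated identity $S = N$, whereas the paper's own proof dwells on the general Amdahl expression and the $N \to \infty$ limit $S_{\max} = 1/(1-P)$ without explicitly carrying out the $P = 1$ specialization; in that sense your closing step is actually more faithful to the theorem as stated.
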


\begin{proof}
See Appendix A.5.
\end{proof}

However, Amdahl's Law imposes a limit due to the serial portion of the code.

\begin{lemma}[Amdahl's Law]\label{lemma:amdahl}
The maximum speedup $S$ achievable by parallelization is:
\[
S = \frac{1}{(1 - P) + \frac{P}{N}},
\]
where $P$ is the fraction of the program that can be parallelized, and $N$ is the number of processors.
\end{lemma}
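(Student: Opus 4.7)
The plan is to set up Amdahl's Law as a ratio of two execution times, one sequential and one parallel, and then simplify. First I would normalize the single-processor execution time to $T_1 = 1$ (without loss of generality, since speedup is scale-invariant). By hypothesis, a fraction $P \in [0,1]$ of this work is parallelizable and the remaining fraction $1-P$ is inherently serial, so I would write $T_1 = (1-P) + P$ as the decomposition into these two components.

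Next I would model the execution time on $N$ processors under the idealized assumptions of the preceding theorem on parallel speedup: the serial portion contributes its full cost $1-P$ regardless of $N$, while the parallel portion is distributed perfectly across the $N$ processors and contributes $P/N$. Summing gives $T_N = (1-P) + P/N$. The speedup is defined as $S = T_1/T_N$, and substituting yields
\[
S = \frac{1}{(1-P) + \tfrac{P}{N}},
\]
which is the stated formula.

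Finally, I would include two sanity checks to confirm the formula behaves as expected and to tie it back to the previous theorem. Setting $N=1$ recovers $S = 1$, as required. Letting $N \to \infty$ gives $S \to 1/(1-P)$, the asymptotic ceiling imposed by the serial fraction, and when $P = 1$ (fully parallelizable code) the bound reduces to $S = N$, recovering Theorem \ref{theorem:parallel_speedup} as a special case.

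There is no serious technical obstacle here; the only mildly delicate point is a modeling one, namely justifying that the parallelizable fraction really does split evenly across processors with no synchronization or communication overhead. I would address this by pointing out that the lemma is explicitly an upper bound on achievable speedup under ideal conditions, so any overhead only makes $T_N$ larger and $S$ smaller, consistent with the inequality interpretation of the formula.
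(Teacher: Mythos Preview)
Your argument is correct and matches the paper's own proof essentially line for line: both decompose the single-processor time into serial and parallel fractions, scale the parallel part by $1/N$, and compute the ratio $T_1/T_N$. Your normalization $T_1 = 1$ and the added sanity checks are minor cosmetic additions but do not change the substance.
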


\begin{proof}
See Appendix A.5.
\end{proof}

Implement spectral convolution layers using FFT algorithms, which are highly optimized for parallel execution \cite{cooley1965algorithm}.

Optimizing computational efficiency enables the neural operator to scale to larger datasets, higher resolutions, and more complex PDEs.

\begin{theorem}[Feasibility of High-Dimensional Problems]\label{theorem:high_dimensional_feasibility}
Efficient computational implementations make it feasible to apply neural operators to high-dimensional problems that were previously intractable due to computational limitations.
\end{theorem}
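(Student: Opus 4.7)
The plan is to assemble the earlier complexity and speedup results into a quantitative feasibility statement. First I would fix a precise notion of "tractable": a method is tractable on a problem with $N$ total spatial degrees of freedom if its wall-clock cost grows at most quasilinearly in $N$, with hidden constants independent of the ambient dimension $d$. Under this convention, "intractable" covers exactly the classical grid-based PDE solvers whose cost scales as $M^d$, where $M$ is the per-axis resolution, that is, exponentially in $d$.

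Next I would bound the cost of a single neural-operator layer. Using the FFT-based spectral convolution from Lemma~\ref{lemma:multiscale_layers}, each layer costs $\mathcal{O}(N \log N)$ rather than the $\mathcal{O}(N^2)$ of a naive kernel integral or the $\mathcal{O}(M^d)$ of a classical stencil sweep. Stacking $K$ layers, including the pointwise lifting and projection operations, gives a sequential cost of $\mathcal{O}(K N \log N)$, which is quasilinear in $N$ regardless of how those degrees of freedom are distributed across dimensions.

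I would then invoke the parallel-computing results of Theorem~\ref{theorem:parallel_speedup} and Lemma~\ref{lemma:amdahl} to pass to wall-clock time:
\[
T_{\text{wall}} \;\leq\; \frac{C\, K N \log N}{\,1 - P + P/N_{\text{proc}}\,},
\]
where $P$ is the parallelizable fraction of the forward pass. Since the FFT, the spectral-domain multiplications, and the pointwise nonlinearities are all embarrassingly parallel, $P$ can be driven arbitrarily close to $1$. Combining this with the layer bound, one concludes that for any fixed depth $K$ and fixed $P$, $T_{\text{wall}}$ grows only quasilinearly in the total number of degrees of freedom, so problems previously ruled out by the $M^d$ scaling of conventional methods become feasible on standard parallel hardware.

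The main obstacle is conceptual rather than calculational: the theorem as written is a qualitative feasibility claim, so most of the work lies in pinning down precise definitions before any inequality can be invoked. The subtlest technical point is ruling out a hidden curse of dimensionality sneaking in through the channel width of the lifting and projection layers; I would address this by noting that the channel width is an architectural hyperparameter decoupled from $d$, and so contributes only a constant multiplicative factor to the bound above. Once these definitions and the width-independence observation are in place, the theorem follows by directly composing the FFT complexity of Lemma~\ref{lemma:multiscale_layers} with the Amdahl bound of Lemma~\ref{lemma:amdahl}.
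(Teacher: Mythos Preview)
Your proposal is correct and follows the same overall strategy as the paper: reduce per-layer cost to $\mathcal{O}(N \log N)$ via FFT-based spectral convolution and then appeal to parallel speedup. The differences are mainly of emphasis. You are more careful about formalizing what ``tractable'' means, about tracking constants across $K$ stacked layers, and about ruling out a hidden curse of dimensionality in the channel width; you also invoke Amdahl's Law explicitly in the wall-clock bound. The paper, by contrast, spends more space on a narrative survey of the curse of dimensionality, the Convolution Theorem, and the explicit FNO layer structure, and it adds one substantive ingredient you omit: \emph{spectral sparsity}. If the significant Fourier modes lie in a set of size $s \ll N$, the effective cost drops to $\mathcal{O}(s \log N)$, and this is the paper's main lever against the $N = n^d$ blowup beyond mere quasilinearity. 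Your argument is tighter as a formal proof; the paper's sparsity observation gives a stronger dimension-independent reduction when it applies.
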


\begin{proof}
See Appendix A.5.
\end{proof}

\vspace{3pt}

\section{Conclusion}

So far, we have translated theoretical insights into practical design recommendations for neural operators, each supported by rigorous mathematical proofs and relevant citations. By using contraction mappings, multi-scale representations, sufficient network capacity, regularization, and computational optimizations, we enhance the stability, convergence, generalization, and efficiency of neural operators.

Looking forward, future work should include exploring adaptive architectures that dynamically adjust their structure based on input complexity, incorporating probabilistic methods to quantify prediction uncertainty, and integrating neural operators with classical numerical methods to achieve enhanced performance.

\newpage

\appendix
\renewcommand{\thesection}{A.\arabic{section}}
\setcounter{section}{0}

\section*{Appendix A: Proofs of Theorems and Lemmas}
\addcontentsline{toc}{section}{Appendix A: Proofs of Theorems and Lemmas}

In this appendix, we provide detailed proofs of the theorems and lemmas referenced in the main text.

\section{Proofs for Section 3.1: Design Neural Operators as Contraction Mappings}

\subsection{Proof of Theorem \ref{theorem:lipschitz_nn} (Lipschitz Condition for Neural Networks)}
\begin{proof}
We aim to show that the overall Lipschitz constant $L$ of the neural operator $\mathcal{G}_\theta$, composed of $N$ layers and activation functions, satisfies:
\[
L \leq \left( \prod_{i=1}^{N} L_i \right) L_{\sigma}^{N},
\]
where each layer $f_i$ has a Lipschitz constant $L_i$, and the activation function $\sigma$ has a Lipschitz constant $L_{\sigma}$.

We begin by representing the neural operator $\mathcal{G}_\theta$ as a composition of affine transformations (layers) and activation functions. Specifically, for an input $u$, we have:
\[
\mathcal{G}_\theta(u) = f_N \circ \sigma \circ f_{N-1} \circ \sigma \circ \cdots \circ \sigma \circ f_1(u).
\]
Each layer $f_i$ is defined as an affine transformation:
\[
f_i(x) = W_i x + b_i,
\]
where $W_i$ is the weight matrix and $b_i$ is the bias vector for layer $i$.

An affine transformation $f_i$ is Lipschitz continuous with Lipschitz constant $L_i = \| W_i \|$, where $\| W_i \|$ denotes the induced matrix norm (operator norm) of $W_i$. For any $x, y \in \mathbb{R}^n$, we have:
\[
\| f_i(x) - f_i(y) \| = \| W_i x + b_i - (W_i y + b_i) \| = \| W_i (x - y) \| \leq \| W_i \| \cdot \| x - y \|.
\]
This inequality shows that the Lipschitz constant of $f_i$ is $\| W_i \|$. The operator norm $\| W_i \|$ can be explicitly calculated or bounded. For example, if $W_i$ is a matrix, its operator norm induced by the Euclidean norm is the largest singular value of $W_i$.

In addition, the activation function $\sigma: \mathbb{R} \to \mathbb{R}$ is assumed to be Lipschitz continuous with Lipschitz constant $L_{\sigma}$. Common activation functions satisfy this property. For example, the \textit{ReLU} activation function ($\sigma(x) = \max(0, x)$) has $L_{\sigma} = 1$; the \textit{Sigmoid} function ($\sigma(x) = \dfrac{1}{1 + e^{-x}}$) has $L_{\sigma} = \dfrac{1}{4}$; and the \textit{Tanh} function ($\sigma(x) = \tanh(x)$) has $L_{\sigma} = 1$.

For any $x, y \in \mathbb{R}$, we have:
\[
| \sigma(x) - \sigma(y) | \leq L_{\sigma} | x - y |.
\]
When extending to vector inputs, since activation functions are applied element-wise, the Lipschitz constant remains the same:
\[
\| \sigma(x) - \sigma(y) \| \leq L_{\sigma} \| x - y \|.
\]

It is a fundamental property that the composition of two Lipschitz continuous functions is Lipschitz continuous, with the Lipschitz constant of the composition being at most the product of the individual Lipschitz constants. Specifically, let $f: X \to Y$ and $g: Y \to Z$ be Lipschitz continuous functions with constants $L_f$ and $L_g$, respectively. Then, the composition $h = g \circ f$ is Lipschitz continuous with Lipschitz constant $L_h \leq L_g L_f$. For a proof of this property, see standard analysis texts such as Rudin \cite{rudin1976principles}.

We now apply this property recursively to the layers and activation functions of the neural operator.

Starting with $x_0 = u$, the output after the first layer and activation function is:
\[
x_1 = \sigma(f_1(x_0)).
\]
By applying the composition property, the Lipschitz constant from $x_0$ to $x_1$ is:
\[
L_1^{(1)} = L_{\sigma} L_1.
\]

Similarly, for the subsequent layers ($i = 2$ to $N$), we have:
\[
x_i = \sigma(f_i(x_{i-1})),
\]
with Lipschitz constant from $x_{i-1}$ to $x_i$ given by:
\[
L_i^{(i)} = L_{\sigma} L_i.
\]

The Lipschitz constant from the input $u$ to the output $x_N$ is then the product of the individual Lipschitz constants:
\[
L_{\text{total}} = \prod_{i=1}^{N} L_i^{(i)} = \left( \prod_{i=1}^{N} L_{\sigma} L_i \right) = L_{\sigma}^N \left( \prod_{i=1}^{N} L_i \right).
\]

Therefore, the overall Lipschitz constant $L$ of the neural operator $\mathcal{G}_\theta$ satisfies:
\[
L \leq L_{\sigma}^N \left( \prod_{i=1}^{N} L_i \right).
\]
This result shows that the neural operator is Lipschitz continuous, and its Lipschitz constant depends on the product of the Lipschitz constants of the layers and activation functions.

To control the Lipschitz constant of the layers, one can apply spectral normalization \cite{miyato2018spectral}, which scales the weight matrices so that their spectral norms are bounded. This helps in ensuring that the neural operator is a contraction mapping if desired. The choice of activation function also affects the overall Lipschitz constant. Using activation functions with smaller Lipschitz constants can aid in controlling the Lipschitz constant of the entire network.

Moreover, increasing the depth $N$ of the network can lead to an exponential increase in the Lipschitz constant due to the term $L_{\sigma}^N$. Care must be taken to balance depth with the desired Lipschitz properties. For discussions on the impact of depth on Lipschitz constants, see Bartlett et al. (2017) \cite{bartlett2017spectrally}.

Thus, we have shown that the neural operator $\mathcal{G}_\theta$ is Lipschitz continuous with Lipschitz constant bounded by $L \leq \left( \prod_{i=1}^{N} L_i \right) L_{\sigma}^{N}$.
\end{proof}

\vspace{2pt}

\subsection{Proof of Corollary \ref{corollary:spectral_norm} (Ensuring Contraction via Spectral Normalization)}
\begin{proof}
We aim to show that by constraining the spectral norm of each weight matrix $W_i$ such that $\| W_i \| \leq q^{1/N}$ and choosing the activation function $\sigma$ with Lipschitz constant $L_{\sigma} \leq 1$, the overall Lipschitz constant $L$ of the neural operator $\mathcal{G}_\theta$ satisfies $L \leq q$. This ensures that $\mathcal{G}_\theta$ is a contraction mapping with contraction constant $q$.

From Theorem \ref{theorem:lipschitz_nn}, we know that the overall Lipschitz constant of the neural operator is bounded by:
\[
L \leq L_{\sigma}^N \left( \prod_{i=1}^N L_i \right),
\]
where $L_i = \| W_i \|$ is the Lipschitz constant of layer $i$. By constraining the spectral norm of each weight matrix to $\| W_i \| \leq q^{1/N}$, it follows that:
\[
L_i \leq q^{1/N}.
\]
Substituting this into the expression for $L$, we obtain:
\[
L \leq L_{\sigma}^N \left( \prod_{i=1}^N q^{1/N} \right) = L_{\sigma}^N \left( q^{1/N} \right)^N = L_{\sigma}^N q.
\]
Since we have chosen $L_{\sigma} \leq 1$, it follows that $L_{\sigma}^N \leq 1$. Therefore:
\[
L \leq q.
\]
This result shows that the neural operator $\mathcal{G}_\theta$ has a Lipschitz constant bounded by $q$, ensuring it is a contraction mapping.

By ensuring the spectral norms of the weight matrices are appropriately bounded, we control the Lipschitz constants of the layers. Spectral normalization \cite{miyato2018spectral} is a technique that rescales the weight matrices to have a desired spectral norm, effectively controlling the Lipschitz constant of each layer. This is crucial for ensuring the overall network satisfies the contraction condition.

Choosing an activation function with Lipschitz constant $L_{\sigma} \leq 1$ is also essential. Common activation functions like ReLU ($L_{\sigma} = 1$) and Tanh ($L_{\sigma} = 1$) satisfy this condition. Functions like the Sigmoid have $L_{\sigma} = \frac{1}{4}$, which also meets the requirement.

Ensuring that the neural operator is a contraction mapping allows us to invoke the Banach Fixed Point Theorem \cite{banach1922operations}, guaranteeing the existence and uniqueness of fixed points and the convergence of iterative processes. This is particularly important in the context of solving equations and iterative methods within neural networks.

Thus, by constraining the spectral norms of the weight matrices and choosing suitable activation functions, we ensure that $\mathcal{G}_\theta$ is a contraction mapping with contraction constant $q$.

\end{proof}

\section{Proofs for Section 3.2: Integrate Multi-Scale Representations}

\subsection{Proof of Theorem \ref{theorem:combined_bases} (Approximation Using Combined Bases)}
\begin{proof}
We aim to demonstrate that any function $f \in L^2(D)$ can be approximated arbitrarily well using a finite combination of Fourier and wavelet basis functions.

Firstly, recall that the set of complex exponentials $\{ e^{i k x} \}_{k \in \mathbb{Z}}$ forms an orthonormal basis for $L^2$ functions defined on a compact domain with periodic boundary conditions. This is the foundation of Fourier series, which effectively capture the global behavior of functions \cite{jackson1930fourier}.

Wavelet bases, constructed from dilations and translations of a mother wavelet $\psi(x)$, provide an orthonormal basis for $L^2(\mathbb{R})$ and are adept at representing local features due to their time-frequency localization \cite{daubechies1992ten}. They allow for multiresolution analysis, capturing both coarse and fine details of a function.

By combining these bases, we leverage the strengths of both global and local representations. Specifically, for a function $f \in L^2(D)$, we can express it as:
\[
f(x) = \sum_{k \in \mathbb{Z}} c_k e^{i k x} + \sum_{j \in \mathbb{Z}} \sum_{m \in \mathbb{Z}} d_{j,m} \psi_{j,m}(x),
\]
where $\psi_{j,m}(x) = 2^{j/2} \psi(2^j x - m)$ are the wavelet functions at scale $j$ and position $m$, and $c_k$, $d_{j,m}$ are the Fourier and wavelet coefficients, respectively.

In practice, we approximate $f(x)$ using finite sums:
\[
f_N(x) = \sum_{k=-K}^{K} c_k e^{i k x} + \sum_{j=J_0}^{J} \sum_{m=0}^{M_j} d_{j,m} \psi_{j,m}(x),
\]
where $K$, $J$, and $M_j$ are finite truncation limits.

The approximation error is given by:
\[
\| f - f_N \|_{L^2(D)} = \left\| \sum_{|k| > K} c_k e^{i k x} + \sum_{j > J} \sum_{m} d_{j,m} \psi_{j,m}(x) \right\|_{L^2(D)}.
\]
As both the Fourier series and wavelet series converge in $L^2(D)$, increasing $K$ and $J$ allows the approximation error to be made arbitrarily small.

For functions with smooth global behavior and localized irregularities, the Fourier basis efficiently captures the global smooth components, while the wavelet basis captures local features and discontinuities \cite{mallat1999wavelet}. This combined approach often leads to faster convergence and better approximation with fewer terms than using either basis alone.

Therefore, any function $f \in L^2(D)$ can be approximated arbitrarily well by a finite combination of Fourier and wavelet basis functions, as the sum of two complete bases is still complete in $L^2(D)$.

\end{proof}

\vspace{2pt}

\subsection{Proof of Lemma \ref{lemma:multiscale_layers} (Efficient Computation with Multi-Scale Layers)}
\begin{proof}
We aim to show that integrating Fourier and wavelet transforms into neural network layers allows for efficient computation with computational complexity $\mathcal{O}(N \log N)$, where $N$ is the number of data points.

Consider a discrete signal $x = [x_0, x_1, \dots, x_{N-1}] \in \mathbb{R}^N$. The Discrete Fourier Transform (DFT) of $x$ is defined as:
\[
X_k = \sum_{n=0}^{N-1} x_n e^{-i 2\pi k n / N}, \quad k = 0,1,\dots,N-1.
\]
Computing the DFT directly requires $\mathcal{O}(N^2)$ operations due to the nested summations.

The Fast Fourier Transform (FFT) algorithm reduces this complexity to $\mathcal{O}(N \log N)$ by recursively decomposing the DFT into smaller DFTs and exploiting symmetries in the complex exponentials \cite{cooley1965algorithm}.

In neural networks, convolution operations are essential. The convolution of two discrete signals $x$ and $h$ is defined as:
\[
(y)_n = (x * h)_n = \sum_{m=0}^{N-1} x_m h_{(n - m) \bmod N}.
\]
Computing this convolution directly has a complexity of $\mathcal{O}(N^2)$.

However, the Convolution Theorem states that convolution in the time domain corresponds to pointwise multiplication in the frequency domain:
\[
\mathcal{F}\{ x * h \} = \mathcal{F}\{ x \} \cdot \mathcal{F}\{ h \},
\]
where $\mathcal{F}\{ \cdot \}$ denotes the Fourier Transform, and $\cdot$ represents element-wise multiplication.

Therefore, we can compute the convolution efficiently by:
\begin{enumerate}
    \item Computing $\mathcal{F}\{ x \}$ and $\mathcal{F}\{ h \}$ using the FFT, each requiring $\mathcal{O}(N \log N)$ operations.
    \item Performing element-wise multiplication: $Y_k = X_k \cdot H_k$, which requires $\mathcal{O}(N)$ operations.
    \item Computing the inverse FFT of $Y_k$ to obtain $y_n$, requiring $\mathcal{O}(N \log N)$ operations.
\end{enumerate}
The total computational complexity is $\mathcal{O}(N \log N)$.

In neural networks, spectral convolution layers utilize this approach to perform convolution operations efficiently \cite{li2020fourier}. By transforming inputs and filters to the frequency domain, convolutions become element-wise multiplications, significantly reducing computational cost.

The Discrete Wavelet Transform (DWT) also provides a time-frequency representation of a signal, capturing both location and scale information. For a signal $x$, the DWT decomposes it into approximation coefficients $a_j[n]$ and detail coefficients $d_j[n]$ at different scales $j$.

At each level $j$, the approximation coefficients are computed by convolution with a scaling filter (low-pass filter) $h[n]$, followed by downsampling:
\[
a_j[n] = \sum_{k} a_{j-1}[k] \, h[2n - k].
\]
The detail coefficients are computed using a wavelet filter (high-pass filter) $g[n]$:
\[
d_j[n] = \sum_{k} a_{j-1}[k] \, g[2n - k].
\]
Here, $a_{j-1}[k]$ are the approximation coefficients from the previous level, and the downsampling by 2 reduces the number of samples by half at each level.

The overall computational complexity for computing all levels of the DWT is $\mathcal{O}(N)$, as the amount of computation halves at each subsequent level \cite{mallat1989theory}.

In neural networks, wavelet transform layers can be integrated to capture features at multiple scales efficiently. By applying the DWT within the network, we can extract localized features with reduced computational cost.

By integrating both the FFT and DWT into neural network layers, we achieve efficient computation for both global and local feature extraction.

\begin{itemize}
    \item \textbf{FFT-based Convolution:} Allows for efficient computation of convolutional layers with complexity $\mathcal{O}(N \log N)$.
    \item \textbf{DWT-based Feature Extraction:} Provides multiresolution analysis with complexity $\mathcal{O}(N)$.
\end{itemize}

When combined, the overall computational complexity remains $\mathcal{O}(N \log N)$, dominated by the FFT operations.

In general, integrating these efficient algorithms enables neural operators to handle high-dimensional inputs and large datasets without prohibitive computational costs. This is essential for practical applications involving partial differential equations and other complex systems where computational efficiency is critical.

Therefore, by utilizing the computational efficiencies of the FFT and DWT within neural network architectures, we can perform the necessary operations in neural operators with $\mathcal{O}(N \log N)$ complexity or better, enabling scalable and efficient computation.

\end{proof}

\vspace{2pt}

\subsection{Proof of Theorem \ref{theorem:approximation_error_multiscale} (Improved Approximation Error with Multi-Scale Representations)}
\begin{proof}
We aim to demonstrate that for a function $f \in L^2(D)$ with both smooth and localized features, a neural operator employing multi-scale representations can approximate $f$ with an error $\epsilon$ that decreases exponentially with the number of basis functions used.

Consider approximating $f$ using a finite combination of Fourier and wavelet basis functions:
\[
f_N(x) = \sum_{k=-K}^{K} c_k e^{i k x} + \sum_{j=J_0}^{J} \sum_{m=0}^{M_j} d_{j,m} \psi_{j,m}(x),
\]
where:
\begin{itemize}
    \item $c_k$ are the Fourier coefficients given by $c_k = \frac{1}{2\pi} \int_{D} f(x) e^{-i k x} \, dx$.
    \item $\psi_{j,m}(x)$ are wavelet basis functions at scale $j$ and translation $m$.
    \item $d_{j,m}$ are the wavelet coefficients given by $d_{j,m} = \int_{D} f(x) \psi_{j,m}(x) \, dx$.
\end{itemize}

The approximation error in the $L^2$ norm is given as:
\[
\epsilon^2 = \| f - f_N \|_{L^2(D)}^2 = \int_{D} \left| f(x) - f_N(x) \right|^2 dx.
\]
Expanding this, we have:
\[
\epsilon^2 = \left\| \sum_{|k| > K} c_k e^{i k x} + \sum_{j > J} \sum_{m} d_{j,m} \psi_{j,m}(x) \right\|_{L^2(D)}^2.
\]
By Parseval's identity, the squared $L^2$ norm of a function equals the sum of the squares of its coefficients:
\[
\epsilon^2 = \sum_{|k| > K} |c_k|^2 + \sum_{j > J} \sum_{m} |d_{j,m}|^2.
\]

Now, the decay of the Fourier coefficients $|c_k|$ is directly related to the smoothness of $f$. If $f$ is $s$ times continuously differentiable over $D$, then by standard results in Fourier analysis \cite{titchmarsh1948introduction}:
\[
|c_k| \leq \frac{C}{|k|^s},
\]
for some constant $C > 0$. This implies that the tail of the Fourier series (coefficients with $|k| > K$) decreases rapidly with $K$, and the error from truncating the Fourier series decreases as:
\[
\sum_{|k| > K} |c_k|^2 \leq C' K^{-(2s - 1)},
\]
where $C'$ is another constant depending on $f$.

Similarly, the decay of wavelet coefficients $|d_{j,m}|$ depends on the regularity of $f$. For functions in the Besov space $B_{p,q}^s$, wavelet coefficients satisfy \cite{daubechies1992ten,cohen2003numerical}:
\[
|d_{j,m}| \leq C 2^{-j(s + \frac{1}{2} - \frac{1}{p})},
\]
where $s$ is the smoothness parameter, and $p, q$ relate to the integrability and summability of the coefficients.

The sum of the squared wavelet coefficients for scales $j > J$ is then bounded by:
\[
\sum_{j > J} \sum_{m} |d_{j,m}|^2 \leq C'' 2^{-2J(s - \frac{1}{2})},
\]
with $C''$ depending on $f$ and the wavelet basis.

Combining these decay estimates, the total approximation error is bounded by:
\[
\epsilon^2 \leq C' K^{-(2s - 1)} + C'' 2^{-2J(s - \frac{1}{2})}.
\]
By selecting $K$ and $J$ such that:
\[
K = K_0 N^{\alpha}, \quad 2^{J} = J_0 N^{\beta},
\]
for some $\alpha, \beta > 0$, and constants $K_0, J_0$, we can make $\epsilon$ decrease exponentially with $N$, the total number of basis functions used.

To optimize the approximation, we balance the contributions of the Fourier and wavelet terms. For functions that are smooth overall but have localized irregularities, the Fourier coefficients decay rapidly except near discontinuities, where wavelet coefficients capture the localized features efficiently.

By choosing $\alpha$ and $\beta$ appropriately, we ensure that both terms in the error bound decrease at similar rates, minimizing the total error. This balancing act leverages the strengths of both bases.

\textbf{In the context of neural operators}, incorporating multi-scale representations allows the network to approximate functions with both global smoothness and local irregularities effectively. The neural network learns to represent $f$ using a combination of global (Fourier) and local (wavelet) features.

The exponential decay in approximation error implies that the number of neurons (or parameters) required to achieve a desired accuracy $\epsilon$ grows only logarithmically with $1/\epsilon$. This is a significant improvement over methods that do not exploit multi-scale structures.

\textbf{Conclusion}

Therefore, the multi-scale representation enhances the approximation capabilities of the neural operator, achieving an approximation error $\epsilon$ that decreases exponentially with the number of basis functions used. This approach aligns with the principles of sparse representation and compressed sensing \cite{donoho2006compressed}, where functions are represented using a small number of significant coefficients.

\end{proof}

\section{Proofs for Section 3.3: Ensure Universal Approximation Capability}

\subsection{Proof of Theorem \ref{theorem:capacity_growth} (Capacity Growth with Network Size)}
\begin{proof}
We aim to demonstrate that for a feedforward neural network using ReLU activation functions, the number of linear regions represented by the network grows exponentially with the depth of the network and polynomially with its width.

A ReLU activation function $\sigma(x) = \max(0, x)$ introduces piecewise linearity into the network. Each neuron with a ReLU activation divides its input space into two regions: one where the neuron is active ($x > 0$) and one where it is inactive ($x \leq 0$). The combination of these regions across all neurons leads to a partitioning of the input space into linear regions, within which the neural network behaves as a linear function.

Consider a feedforward ReLU network with $L$ layers. Let $n_l$ denote the number of neurons in layer $l$, for $l = 1, 2, \dots, L$. The input dimension is $n_0$. The total number of neurons is $N = \sum_{l=1}^{L} n_l$.

Mont{\'u}far et al.~\cite{montufar2014number} have shown that the maximal number of linear regions $R$ that such a network can represent satisfies:
\[
R \geq \prod_{l=1}^{L} \left( \frac{n_l}{n_l - n_{l-1}} \right)^{n_{l-1}}.
\]
When all layers have the same width $n$ (i.e., $n_l = n$ for all $l$) and $n \geq n_0$, this simplifies to:
\[
R \geq \left( \frac{n}{n - n_0} \right)^{n_0} \left( \frac{n}{n - n} \right)^{(L-1)n}.
\]
Since $n - n = 0$, the expression becomes undefined. To address this, we consider the more accurate lower bound provided by Serra et al.~\cite{serra2018bounding}, which refines the estimate of linear regions:
\[
R \geq 2^{\sum_{l=1}^{L} n_l}.
\]
This indicates that the number of linear regions grows exponentially with the total number of neurons in the network.

Alternatively, Mont{\'u}far et al.~\cite{montufar2014number} provide a simpler lower bound for fully connected networks with ReLU activations:
\[
R \geq \left( \frac{n}{n_0} \right)^{n_0} n^{(L-1)n_0}.
\]
This expression shows that $R$ grows exponentially with the depth $L$ and polynomially with the width $n$.

Let's take an example calculation. For a network where $n = n_0$ (constant width equal to input dimension), the lower bound simplifies to:
\[
R \geq n^{(L-1)n_0}.
\]
Since $n = n_0$, we have:
\[
R \geq n_0^{(L-1)n_0} = \left( n_0^{n_0} \right)^{L-1}.
\]
This clearly demonstrates exponential growth with respect to the depth $L$.

With regard to the implications for expressive capacity, The exponential growth of the number of linear regions with depth implies that deeper networks can represent more complex functions by partitioning the input space into a greater number of linear regions. Each region corresponds to a different linear function, and the network's overall function is piecewise linear.

Raghu et al.~\cite{raghu2017expressive} analyzed the trajectory length through the network as a measure of expressivity and found that depth contributes exponentially to expressivity measures, while width contributes polynomially.

Therefore, we conclude that the expressive capacity of ReLU neural networks grows exponentially with the network's depth and polynomially with its width, as evidenced by the number of linear regions they can represent. This result supports the assertion that deeper networks have greater expressive power.

\vspace{2pt}

\end{proof}

\subsection{Proof of Lemma \ref{lemma:capacity_overfitting} (Trade-off Between Capacity and Overfitting)}
\begin{proof}
We aim to demonstrate that increasing the capacity of a neural network can lead to overfitting, highlighting the trade-off between model complexity and generalization ability.

Let $\mathcal{H}$ denote the hypothesis space of functions that the neural network can represent. Increasing the network's capacity expands $\mathcal{H}$, allowing the model to approximate more complex functions. Specifically, a higher-capacity network can achieve a smaller empirical risk (training error) $R_{\text{emp}}$ by fitting the training data more precisely.

However, the true risk (generalization error) $R$ depends on how well the model performs on unseen data. According to the bias-variance decomposition \cite{geman1992neural}, the expected generalization error can be expressed as:
\[
\mathbb{E}_{\mathcal{D}}[R] = \text{Bias}^2 + \text{Variance} + \sigma^2,
\]
where:
\begin{itemize}
    \item $\text{Bias}$ measures the error due to simplifying assumptions made by the model;
    \item $\text{Variance}$ measures the sensitivity of the model to fluctuations in the training set;
    \item $\sigma^2$ is the irreducible error inherent in the data.
\end{itemize}

As the capacity of the network increases, the bias tends to decrease because the model can fit the training data more closely. However, the variance tends to increase because the model becomes more sensitive to small fluctuations or noise in the training data. This increased variance can lead to overfitting, where the model captures noise and irrelevant patterns, resulting in a decrease in generalization performance.

Overfitting is characterized by a situation where:
\[
R_{\text{emp}} \downarrow, \quad R \uparrow,
\]
meaning that while the training error decreases, the validation or test error increases.

To prevent overfitting, regularization techniques are employed to constrain the complexity of the hypothesis space $\mathcal{H}$. Regularization can be introduced by adding a penalty term $\Omega(\theta)$ to the loss function $L(\theta)$, leading to the regularized loss:
\[
L_{\text{reg}}(\theta) = L(\theta) + \lambda \Omega(\theta),
\]
where $\theta$ represents the network parameters, and $\lambda > 0$ controls the strength of the regularization.

Common regularization methods include:
\begin{enumerate}
    \item \textbf{Weight Decay (L2 Regularization)}: Penalizes large weights by setting $\Omega(\theta) = \frac{1}{2}\|\theta\|_2^2$.
    \item \textbf{L1 Regularization}: Encourages sparsity by setting $\Omega(\theta) = \|\theta\|_1$.
    \item \textbf{Dropout}: Randomly sets a fraction of activations to zero during training to prevent co-adaptation \cite{srivastava2014dropout}.
\end{enumerate}

By constraining $\mathcal{H}$, regularization reduces variance at the expense of a slight increase in bias, ultimately improving the generalization error $R$.

Therefore, there exists a trade-off between model capacity and overfitting: increasing capacity enhances the ability to fit complex functions but may lead to overfitting if not properly regularized. Effective regularization techniques are essential to balance this trade-off and achieve optimal generalization performance \cite{goodfellow2016deep}.

\end{proof}

\section{Proofs for Section 3.4: Enhance Generalization through Regularization}

\subsection{Proof of Theorem \ref{theorem:weight_decay} (Effectiveness of Weight Decay)}
\begin{proof}
We aim to show that weight decay (L2 regularization) effectively reduces overfitting by penalizing large weights, thereby constraining the model complexity and improving generalization.

Consider a neural network with parameters (weights) $\theta$. The standard loss function $L(\theta)$ measures the discrepancy between the network's predictions and the training data. Weight decay modifies the loss function by adding a regularization term:
\[
L_{\text{reg}}(\theta) = L(\theta) + \lambda \frac{1}{2} \|\theta\|_2^2,
\]
where $\|\theta\|_2^2 = \sum_{i} \theta_i^2$ is the squared L2 norm of the weights, and $\lambda > 0$ is the regularization coefficient.

The gradient of the regularized loss with respect to the weights is:
\[
\nabla_{\theta} L_{\text{reg}}(\theta) = \nabla_{\theta} L(\theta) + \lambda \theta.
\]
During training with gradient descent, the weight update rule becomes:
\[
\theta^{(t+1)} = \theta^{(t)} - \eta \left( \nabla_{\theta} L(\theta^{(t)}) + \lambda \theta^{(t)} \right),
\]
where $\eta$ is the learning rate.

The term $\lambda \theta^{(t)}$ acts as a force that drives the weights toward zero. This discourages the model from assigning excessive importance to any particular feature, effectively reducing the complexity of the model.

By penalizing large weights, weight decay reduces the variance component of the generalization error. According to the bias-variance decomposition, the expected generalization error can be written as:
\[
\mathbb{E}_{\mathcal{D}}[R] = \text{Bias}^2 + \text{Variance} + \sigma^2.
\]
Weight decay increases the bias slightly due to the added constraint but decreases the variance more significantly, leading to a net reduction in generalization error.

Moreover, in linear models, weight decay corresponds to Ridge Regression \cite{hoerl1970ridge}, where the regularization term stabilizes the inversion of ill-conditioned matrices, leading to more robust solutions.

Therefore, weight decay effectively prevents overfitting by constraining the magnitude of the weights, promoting simpler models that generalize better to unseen data \cite{krogh1992simple}.

\end{proof}

\vspace{2pt}

\subsection{Proof of Lemma \ref{lemma:dropout} (Dropout Prevents Co-adaptation)}
\begin{proof}
We aim to demonstrate that dropout regularization reduces overfitting by preventing co-adaptation of neurons and encouraging the network to learn robust feature representations.

During training, dropout randomly deactivates a fraction $p$ of the neurons in each layer. For a neuron with activation $h_i$, the modified activation $\tilde{h}_i$ during training is:
\[
\tilde{h}_i = h_i \cdot \zeta_i,
\]
where $\zeta_i$ is a Bernoulli random variable:
\[
\zeta_i = \begin{cases}
1, & \text{with probability } q = 1 - p, \\
0, & \text{with probability } p.
\end{cases}
\]

This random deactivation forces the network to learn redundancies because any neuron could be dropped out at any time. As a result, neurons cannot rely on specific other neurons being present and must learn features that are useful in conjunction with many different subsets of other neurons.

By preventing co-adaptation, where neurons adjust to rely on outputs from specific other neurons, dropout reduces the risk of overfitting. The network becomes less sensitive to the noise and variations in the training data, improving generalization to unseen data \cite{srivastava2014dropout}.

At test time, to compensate for the dropped activations during training, the weights are scaled by a factor of $q$ (or equivalently, activations are multiplied by $q$):
\[
h_i^{\text{test}} = q h_i.
\]
This ensures that the expected output of each neuron remains the same between training and testing:
\[
\mathbb{E}[\tilde{h}_i] = q h_i.
\]

Therefore, dropout effectively prevents co-adaptation by encouraging neurons to learn individually useful features, reducing overfitting and enhancing the robustness of the network's predictions.

\end{proof}

\section{Proofs for Section 3.5: Optimize Computational Efficiency}

\subsection{Proof of Theorem \ref{theorem:parallel_speedup} (Speedup with Parallel Computing)}
\begin{proof}
We aim to show that parallel computing can provide a speedup in computation time for parallelizable tasks but that the overall speedup is limited by the serial portion of the computation, as described by Amdahl's Law.

Let:
\begin{itemize}
    \item $T_1$ be the total execution time on a single processor;
    \item $T_N$ be the total execution time using $N$ processors;
    \item $P$ be the fraction of the program that can be parallelized ($0 \leq P \leq 1$);
    \item $S$ be the speedup achieved: $S = \dfrac{T_1}{T_N}$.
\end{itemize}

The single-processor execution time is:
\[
T_1 = T_{\text{serial}} + T_{\text{parallel}},
\]
where $T_{\text{serial}}$ and $T_{\text{parallel}}$ are the times spent on serial and parallelizable portions, respectively.

When using $N$ processors, the parallel portion ideally scales perfectly, so its execution time becomes $T_{\text{parallel}} / N$. The total execution time on $N$ processors is:
\[
T_N = T_{\text{serial}} + \dfrac{T_{\text{parallel}}}{N}.
\]

Substituting $T_{\text{serial}} = (1 - P) T_1$ and $T_{\text{parallel}} = P T_1$, we have:
\[
T_N = (1 - P) T_1 + \dfrac{P T_1}{N}.
\]

Therefore, the speedup $S$ is:
\[
S = \dfrac{T_1}{T_N} = \dfrac{T_1}{(1 - P) T_1 + \dfrac{P T_1}{N}} = \dfrac{1}{(1 - P) + \dfrac{P}{N}}.
\]

As $N \to \infty$, the speedup approaches its theoretical maximum:
\[
S_{\text{max}} = \lim_{N \to \infty} S = \dfrac{1}{1 - P}.
\]

This demonstrates that the speedup is limited by the serial portion of the code. Even with an infinite number of processors, the execution time cannot be reduced below $(1 - P) T_1$.

Thus, while parallel computing significantly reduces computation time for parallelizable tasks, Amdahl's Law shows that the overall speedup is constrained by the fraction of the code that must be executed serially \cite{amdahl1967validity}.

\end{proof}

\vspace{2pt}

\subsection{Proof of Lemma \ref{lemma:amdahl} (Amdahl's Law)}
\begin{proof}
We aim to derive Amdahl's Law, which quantifies the theoretical speedup in latency of the execution of a task when a portion of it is parallelized.

Let:
\begin{itemize}
    \item $T_1$ be the execution time on a single processor;
    \item $T_N$ be the execution time on $N$ processors;
    \item $P$ be the fraction of the execution time that is parallelizable.
\end{itemize}

The execution time on $N$ processors is:
\[
T_N = T_{\text{serial}} + T_{\text{parallel}}',
\]
where:
\[
T_{\text{serial}} = (1 - P) T_1,
\]
\[
T_{\text{parallel}}' = \dfrac{P T_1}{N}.
\]

Therefore:
\[
T_N = (1 - P) T_1 + \dfrac{P T_1}{N}.
\]

The speedup $S$ is given by:
\[
S = \dfrac{T_1}{T_N} = \dfrac{T_1}{(1 - P) T_1 + \dfrac{P T_1}{N}} = \dfrac{1}{(1 - P) + \dfrac{P}{N}}.
\]

This equation represents Amdahl's Law, showing how the speedup $S$ depends on the number of processors $N$ and the parallelizable fraction $P$. It illustrates that as $N$ increases, the speedup asymptotically approaches $1 / (1 - P)$, emphasizing the diminishing returns due to the serial portion of the computation.

Therefore, Amdahl's Law captures the fundamental limitation of parallel computing: the speedup is constrained by the serial fraction of the task, regardless of the number of processors \cite{amdahl1967validity}.

\end{proof}

\vspace{2pt}

\subsection{Proof of Theorem \ref{theorem:high_dimensional_feasibility} (Feasibility of High-Dimensional Problems)}
\begin{proof}
We aim to demonstrate that optimizing computational efficiency through spectral methods and parallel computing enables neural operators to effectively handle high-dimensional problems.

Let $\Omega \subset \mathbb{R}^d$ be a $d$-dimensional domain, and let $u: \Omega \to \mathbb{R}$ be a function of interest. Traditional numerical methods for solving partial differential equations (PDEs), such as finite difference or finite element methods, require discretizing each dimension into $n$ points. This results in a total of $N = n^d$ grid points. Operations like matrix-vector multiplication or convolution over this grid have computational complexities that scale at least linearly with $N$, and often worse, leading to $\mathcal{O}(N^2)$ operations for certain tasks.

The exponential growth of $N$ with respect to the dimension $d$ is known as the "curse of dimensionality." It renders computations infeasible for large $d$ using traditional methods.

Spectral methods, such as the Fourier Transform, provide an alternative by transforming differential operators into algebraic ones in the frequency domain. The $d$-dimensional Discrete Fourier Transform (DFT) of a function $u$ sampled on a regular grid is defined as:
\[
\hat{u}_{\mathbf{k}} = \sum_{\mathbf{n} \in \mathbb{Z}_n^d} u_{\mathbf{n}} e^{-i 2\pi \frac{\mathbf{k} \cdot \mathbf{n}}{n}}, \quad \mathbf{k} \in \mathbb{Z}_n^d,
\]
where $\mathbb{Z}_n = \{0, 1, \dots, n-1\}$, and $\mathbf{n}$ and $\mathbf{k}$ are $d$-dimensional index vectors.

Computing the DFT directly requires $\mathcal{O}(N^2)$ operations. However, the Fast Fourier Transform (FFT) algorithm reduces this to $\mathcal{O}(N \log N)$ by exploiting symmetries and redundancies in the computation \cite{cooley1965algorithm}.

In neural operators, convolution operations are essential. Consider the convolution of two functions $u, v : \Omega \to \mathbb{R}$:
\[
(w)(\mathbf{x}) = (u * v)(\mathbf{x}) = \int_{\Omega} u(\mathbf{y}) v(\mathbf{x} - \mathbf{y}) \, d\mathbf{y}.
\]
Computing this convolution directly requires $\mathcal{O}(N^2)$ operations due to the nested summations over all grid points.

Applying the Convolution Theorem, the Fourier Transform converts convolution into pointwise multiplication:
\[
\hat{w}_{\mathbf{k}} = \hat{u}_{\mathbf{k}} \cdot \hat{v}_{\mathbf{k}}.
\]
This reduces the convolution computation to:
\begin{enumerate}
    \item Compute $\hat{u}_{\mathbf{k}}$ and $\hat{v}_{\mathbf{k}}$ using the FFT: $\mathcal{O}(N \log N)$ operations each;
    \item Perform pointwise multiplication: $\mathcal{O}(N)$ operations;
    \item Compute the inverse FFT to obtain $w(\mathbf{x})$: $\mathcal{O}(N \log N)$ operations.
\end{enumerate}
The total computational complexity becomes $\mathcal{O}(N \log N)$, a significant reduction from $\mathcal{O}(N^2)$.

In high-dimensional problems, many functions of interest exhibit sparsity or low-rank structures in the spectral domain. If $\hat{u}_{\mathbf{k}}$ is sparse, meaning that significant energy is concentrated in a subset $\mathcal{K} \subset \mathbb{Z}_n^d$ with $|\mathcal{K}| = s \ll N$, we can approximate $u$ using:
\[
u(\mathbf{x}) \approx \sum_{\mathbf{k} \in \mathcal{K}} \hat{u}_{\mathbf{k}} e^{i 2\pi \frac{\mathbf{k} \cdot \mathbf{x}}{n}}.
\]
Computations then involve $s$ significant coefficients instead of $N$, reducing complexity to $\mathcal{O}(s \log N)$.

We now consider the FFT algorithm. We regard that this algorithm is highly parallelizable. In a parallel computing environment with $P$ processors, we can divide the data equally among processors. Each processor performs FFT computations on its subset of data:
\[
T_{\text{compute}} = \mathcal{O}\left( \frac{N \log (N/P)}{P} \right).
\]
Communication between processors is required to combine results, but for large $N$, the computation time dominates, and communication overhead can be minimized with efficient algorithms and network architectures \cite{frigo1998fftw}.

Assuming ideal parallel efficiency, the total computational complexity per processor is reduced to approximately $\mathcal{O}\left( \frac{N \log N}{P} \right)$.

We now consider the behaviors of \textbf{neural Operators in High Dimensions}. Neural operators, such as the Fourier Neural Operator \cite{li2020fourier}, leverage spectral convolutions to learn mappings between function spaces. By representing integral kernel operations in the frequency domain, neural operators can efficiently handle high-dimensional inputs.

Consider a neural operator layer defined as:
\[
(u_{\text{next}})(\mathbf{x}) = \sigma\left( W u(\mathbf{x}) + \mathcal{F}^{-1} \left( R \cdot \mathcal{F}[u] \right)(\mathbf{x}) \right),
\]
where:
\begin{itemize}
    \item $W$ is a linear transformation;
    \item $\sigma$ is a nonlinear activation function;
    \item $\mathcal{F}$ and $\mathcal{F}^{-1}$ denote the Fourier and inverse Fourier transforms, respectively;
    \item $R$ is a learned filter in the frequency domain.
\end{itemize}
Computing this layer involves FFTs and pointwise operations, all of which have computational complexities that scale as $\mathcal{O}(N \log N)$ and are amenable to parallelization.

By combining spectral methods that reduce per-processor computational complexity to $\mathcal{O}(N \log N)$ and parallel computing that reduces wall-clock time by distributing computations across $P$ processors, the overall computational effort becomes manageable even in high-dimensional settings.

Therefore, optimizing computational efficiency through spectral methods and parallel computing enables neural operators to handle high-dimensional problems effectively, mitigating the curse of dimensionality and making practical solutions feasible for complex, real-world applications.

\end{proof}

\newpage

\bibliographystyle{IEEEtran}
\bibliography{bibliography}

\end{document}